\def\draft{n}
\def\printname#1{
        \if\draft y
                \smash{\makebox[0pt]{\hspace{-0.5in}
                        \raisebox{8pt}{\tt\tiny #1}}}
        \fi
}
\def\lbl#1{\label{#1}\printname{#1}}
\newtheorem{theorem}{Theorem}[section]
\newtheorem{thm}{Theorem}
\newtheorem{lemma}[theorem]{Lemma}
\newtheorem{corollary}[theorem]{Corollary}
\newtheorem{proposition}[theorem]{Proposition}
\newtheorem{conjecture}{Conjecture}
\newtheorem{definition}[theorem]{Definition}
\theoremstyle{definition}
\newtheorem{remark}[theorem]{Remark}
\def\BC{\mathbb C}
\def\BZ{\mathbb Z}
\def\BR{\mathbb R}
\def\BQ{\mathbb Q}
\def\CP{\mathcal P}
\def\CR{\mathcal R}
\def\CS{\mathcal S}
\def\CT{\mathcal T}
\def\la{\langle}
\def\ra{\rangle}
\DeclareMathOperator{\tr}{\mathrm tr}
\def\al{\alpha}
\def\ve{\varepsilon}
\def\be { \begin{equation} }
\def\ee { \end{equation} }
\def\det{\mathrm{det}}
\begin{document}

\def\bcT{\overline{\mathcal T} }
\def\bcS{\overline{\mathcal{S}} }
\def\bD{{\overline D }}
\def\bTheta{\overline \Theta  }
\def\btheta{\widetilde \theta  }
\def\bcP{\overline{\mathcal P} }
\def\im{\mathrm {Im}}
\def\bs{{\widetilde {\mathfrak s}}}

\def\bbtheta{\overline \theta}
\def\bbs{\overline {\mathfrak s}}
\def\bt{\tilde {\mathfrak t}}

\def\bS{\overline \CS}
\def\bp{{\widetilde {\mathfrak p}}}
\def\bt{{\widetilde {\mathfrak t}}}
\def\wt{\widetilde}
\def\bbp{\overline {\mathfrak p   }}
\def\bbt{\overline {\mathfrak t   }}

\def\CC{\mathcal C}
\newcommand\no[1]{}
\def\div{{\mathrm{div}}}
\def\fin{{\mathrm{fin}}}
\def\hD{{\hat D}}
\def\hP{\hat \CP}
\def\bP{\overline \CP}
\def\hTheta{{\hat \Theta}}
\def\hS{{\hat \CS}}
\def\hT{\hat \CT}
\def\bT{\overline \CT}
\def\cK{\mathcal K}
\def\cQ{\mathcal Q}
\def\Bpi{\BZ[\Pi]}
\def\sC{\mathcal C}
\def\tle{\trianglelefteq}
\def\cS{\mathcal S}
\def\cG{\mathcal G}
\def\fr{{\mathrm{fr}}}
\def\TZ{\mathrm{Tor}_\BZ}
\def\coker{\mathrm{coker}}
\def\ot{\otimes}
\def\Fr{\mathrm{Fr}}
\def\IM{{\mathrm{Im}}}
\def\Mat{{\mathrm{Mat}}}
\def\tX{{\tilde X}}
\def\Det{{\mathrm{det}}}
\def\rk{{\mathrm{rk}}}
\def\vol{\mathrm{vol}}

\def\CC{\mathcal C}
\def\limsupG{\limsup_{\cG \ni \Gamma \to 1}}
\def\limsupGn{\limsup_{\cG^N \ni \Gamma \to 1}}
\newcommand{\rednote}[1]{ {\color{red} [[  #1 ]] }   }
\newcommand{\bluenote}[1]{ {\color{blue} [[  #1 ]] }   }
\def\tY{\tilde Y}
  \def\totr{\overset{\tr}\longrightarrow}
\def\GbP{\Gamma\backslash\Pi}

\title[Torsion growth]{Growth of homology torsion in finite coverings and hyperbolic volume}

\author[Thang  T. Q. L\^e]{Thang  T. Q. L\^e}
\address{School of Mathematics, 686 Cherry Street,
 Georgia Tech, Atlanta, GA 30332, USA}
\email{letu@math.gatech.edu}

%\date{\today}

\thanks{The author was partially supported in part by National Science Foundation. \\
2010 {\em Mathematics Classification:} Primary 57M27. Secondary 57M25.\\
{\em Key words and phrases: Homology torsion, covering, Fuglede-Kadison determinant, hyperbolic volume}}

\begin{abstract}
We give an upper bound for the growth of  homology torsions of finite coverings of  irreducible oriented 3-manifolds  in terms of hyperbolic volume.
\end{abstract}

\maketitle

\section{Introduction}
\subsection{Growth of homology torsions in finite coverings}

Suppose $X$ is a connected finite CW-complex with fundamental group $\Pi= \pi_1(X)$.  For a subgroup $\Gamma < \Pi$ of finite index let $X_\Gamma$ be the covering of $X$ corresponding to $\Gamma$, and $t_j(\Gamma)$ be the size of the $\BZ$-torsion part of $H_j(X_\Gamma,\BZ)$.  We want to study the growth of $t_j(\Gamma)$.

A sequence of subgroups $(\Gamma_k)_{k=1}^\infty$ of $\Pi$ is {\em nested} if $\Gamma_{k+1} < \Gamma_k$ and it is {\em exhaustive} if $\cap_k \Gamma_k = \{ e\}$, where $e$ is the unit of $\Pi$. It is known that the fundamental group of any compact 3-manifold  is residually finite \cite{Hempel}, i.e. it has an  exhaustive nested sequence of normal subgroups  of finite index.

%The fundamental group  $\Pi=\pi_1(X)$ is known to be residually finite, i.e. there is a nested sequence of finite index normal subgroups $\Gamma_k \triangle \Pi$ such that $\cap_n \Gamma_n = \{ 1\}$.

 We will prove the following result and some generalizations of it.

\begin{thm} \lbl{thm.1}
Suppose $X$ is an orientable irreducible compact 3-manifold whose boundary $\partial X$ is either an empty set or  a collection of  tori.
Let $(\Gamma_k)_{k=1}^\infty$ be an exhaustive nested sequence of normal subgroups of $\Pi$ of finite index. Then
$$\limsup_{k \to \infty} \frac{\ln t_1(\Gamma_k)}{[\Pi:\Gamma_k]} \le \frac{\vol(X)}{6 \pi}.$$
\end{thm}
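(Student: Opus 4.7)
The plan is to use the upper-bound direction of the Bergeron--Venkatesh program, converting $t_1(\Gamma)$ into a determinant over $\BZ[\Pi/\Gamma]$, approximating by a Fuglede--Kadison determinant on the universal cover, and finally identifying that $L^2$-invariant with $\vol(X)/(6\pi)$ via L\"uck--Schick.

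Since $\partial X\neq\emptyset$, $X$ collapses onto a $2$-dimensional spine $Y$, so it suffices to bound $t_1(\Gamma)$ for the covers $Y_\Gamma$. Let $\wt Y$ be the universal cover; then $C_*(\wt Y)$ is a chain complex of free $\BZ[\Pi]$-modules in degrees $0,1,2$ and $C_*(Y_\Gamma)=\BZ[\Pi/\Gamma]\otimes_{\BZ[\Pi]}C_*(\wt Y)$. Because $H_0(Y_\Gamma;\BZ)=\BZ$ is torsion free, the short exact sequence $0\to \im\partial_2^\Gamma\to\ker\partial_1^\Gamma\to H_1(Y_\Gamma;\BZ)\to 0$ gives $\mathrm{tors}\,H_1(Y_\Gamma)\cong\mathrm{tors}(\coker\partial_2^\Gamma)$. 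For any integer matrix $A$, Cauchy--Binet applied to the Smith normal form yields $|\mathrm{tors}(\coker A)|\le {\det}'(A)$, where ${\det}'(A)$ denotes the product of the nonzero singular values of $A$. So the problem reduces to estimating $\ln{\det}'(\partial_2^\Gamma)/[\Pi:\Gamma]$.

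Next I would bound ${\det}'(\partial_2^\Gamma)$ by a Fuglede--Kadison determinant of the lift. For $\epsilon>0$ and $A=\partial_2^\Gamma$, one has the elementary spectral estimate
$$\frac{\ln{\det}'(A)}{[\Pi:\Gamma]}\le \frac{1}{2}\,\tr_{\Pi/\Gamma}\ln(A^*A+\epsilon)-\frac{1}{2}\frac{\dim_\BC\ker A}{[\Pi:\Gamma]}\,\ln\epsilon.$$
L\"uck's approximation theorem makes the spectral measure of $A^*A$ converge weakly to that of $\wt\partial_2^*\wt\partial_2$ as $\Gamma_k\to 1$, and also $\dim\ker A/[\Pi:\Gamma]\to b_2^{(2)}(X)=0$ (the $L^2$-Betti numbers of an aspherical compact 3-manifold with nonempty boundary vanish). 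Taking $\limsup_{k\to\infty}$ with $\epsilon$ fixed, then letting $\epsilon\to 0$ and using that $\wt\partial_2$ is of determinant class for 3-manifold groups (Schick), one obtains
$$\limsup_{k\to\infty}\frac{\ln t_1(\Gamma_k)}{[\Pi:\Gamma_k]}\le \log{\det}^{\mathrm{FK}}_\Pi(\wt\partial_2).$$
The analogous $\wt\partial_1$ contribution governs $H_0$-torsion growth and is harmless, so the right-hand side is identified with $-\rho^{(2)}(X)$ up to that trivial term. By L\"uck--Schick, $\rho^{(2)}(X)=-\vol(X)/(6\pi)$ for irreducible 3-manifolds with toroidal boundary (Seifert pieces contribute zero, hyperbolic pieces contribute their volumes), which gives the claimed bound.

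\textbf{Main obstacle.} The delicate step is passing from weak convergence of spectral measures to convergence of the logarithmic integral, since $\log\lambda$ is singular at $\lambda=0$. Controlling this requires an upper semicontinuity (Fatou-type) argument together with uniform control of the small eigenvalues of $(\partial_2^\Gamma)^*\partial_2^\Gamma$ as $\Gamma\to 1$; this is the technical heart of the argument and precisely the reason that only $\limsup$, rather than a limit, appears in the statement.
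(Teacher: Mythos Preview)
Your overall architecture matches the paper's: reduce to a $2$--complex $Y$, bound homology torsion by a geometric determinant, push the limsup through to a Fuglede--Kadison determinant, and identify the result via L\"uck--Schick. Steps 1--3 are fine and essentially coincide with the paper (your Fatou/semicontinuity ``main obstacle'' is exactly Proposition~\ref{r.detbound1} and Theorem~\ref{r.detbound}).

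The genuine gap is step 4. You write that ``the analogous $\wt\partial_1$ contribution \dots\ is harmless, so the right-hand side is identified with $-\rho^{(2)}(X)$ up to that trivial term.'' This is not correct. For the $2$--complex $\sC(\wt Y)$ one has
\[
-\rho^{(2)}(\wt X)\;=\;\ln{\det}_\Pi(\partial_2)\;-\;\ln{\det}_\Pi(\partial_1),
\]
so your bound gives only
\[
\limsup_k \frac{\ln t_1(\Gamma_k)}{[\Pi:\Gamma_k]}\;\le\;\ln{\det}_\Pi(\partial_2)\;=\;\frac{\vol(X)}{6\pi}\;+\;\ln{\det}_\Pi(\partial_1).
\]
The extra term $\ln{\det}_\Pi(\partial_1)$ is $\ge 0$ (determinant conjecture, known for residually finite groups) and is typically \emph{strictly} positive: already for $X=T^2\times I$ with spine $T^2$ one computes $\partial_2=(1-b,\,a-1)$ and $\partial_1=(1-a,\,1-b)^t$, so $\det_\Pi(\partial_2)=\det_\Pi(\partial_1)=\exp\bigl(m(4-a-a^{-1}-b-b^{-1})\bigr)>1$, whereas $\vol(X)/6\pi=0$. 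Thus bounding by ${\det}'(\partial_2^\Gamma)$ alone does \emph{not} yield the stated inequality. You cannot fix this by subtracting $\ln{\det}'(\partial_1^\Gamma)/[\Pi:\Gamma]$ at the finite level either, since that would require a \emph{lower} bound $\liminf\ge \ln{\det}_\Pi(\partial_1)$, i.e.\ the hard (unknown) direction of determinant approximation.

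The paper's remedy is precisely to avoid $\partial_2$ and work with the \emph{reduced Jacobian} $J$, the square $n\times n$ matrix obtained from $\partial_2$ by deleting the column corresponding to a chosen generator $a_{n+1}$. One has $\ln{\det}_\Pi(J)=\vol(X)/6\pi$ on the nose (Theorem~\ref{thm.3}). To relate $t_1(\Gamma)$ to ${\det}'(J_\Gamma)$ rather than ${\det}'(\partial_2^\Gamma)$, the paper splits off a ``circle complex'' $\cK$ via the orthogonally split short exact sequence $0\to\cK\to\sC\to\cQ\to 0$, where $\cQ$ has boundary $J$, and then shows the connecting-map contribution ${\det}'(\beta_\Gamma)$ is negligible (Lemma~\ref{r.circle}). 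That is the missing idea in your sketch.
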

Here $\vol(X)$ is defined as follows.  If the fundamental group of $X$ is finite, let $\vol(X)=0$.
Suppose the fundamental group of $X$ is infinite.
By the Geometrization Conjecture of W.~Thurston proved by G.~Perelman (see e.g. \cite{Boileau,BBBMP}), every piece in the  Jaco-Shalen-Johansson decomposition of $X$  is either Seifert fibered or hyperbolic. Define $\vol(X)$ as the sum of the volumes of all hyperbolic pieces.

\subsection{More general limit: trace limit} For any group $G$ with unit $e$ define a trace function $\tr_G: G \to \BZ$ by $\tr_G(g)=1$ if $g=e$ and $\tr_G(g)=0$ if $g \neq e$. This extends to a $\BC$-linear map $\tr_G: \BC[G]\to \BC$. Here $\BC[G]$ is the group ring of $G$ with complex coefficients.

Assume $X$ is a 3-manifold satisfying the assumption of Theorem \ref{thm.1}, with $\Pi=\pi_1(X)$. Let $\cG$ be the set of all finite index subgroups of $\Pi$. For any $\Gamma \in \cG$, $\Pi$ acts on the right on the finite set $\Gamma\backslash\Pi$ of right cosets of $\Gamma$, and the action gives rise to an action of $\Pi$ on $\BC[\GbP]$, the $\BC$-vector space with base $\GbP$. For $g\in \Pi$, define
$$ \tr_{\GbP}(g) = \frac{\tr(g,\BC[\GbP]) }{[\Pi:\Gamma]},$$
where $\tr(g,\BC[\GbP])$ is the trace of the operator $g$ acting on the vector space $\BC[\GbP]$.
Note that when $\Gamma$ is a normal subgroup of $\Pi$, then $G:=\GbP$ is a group, and $\tr_{\GbP}(g)$ coincides with the above definition of $\tr_G(p(g))$, where $p(g)$ is the image of $g$ in the quotient group $G$.

Suppose $(\Gamma_k)_{k=1}^\infty$  is a sequence of subgroups of $\Pi$ of finite index, i.e. $\Gamma_k \in \cG$. We say
$ \Gamma_k \totr 1$
if for any $g\in \Pi$,
$$ \lim_{k\to \infty} \tr_{\Gamma_k\backslash \Pi}(g) = \tr_\Pi(g).$$
For example, if  $(\Gamma_k)_{k=1}^\infty$  is a nested exhaustive sequence of normal subgroups of $\Pi$ of finite index, then $\Gamma_k \totr 1$.

We have the following stronger version of Theorem \ref{thm.1}.
\begin{thm}
\lbl{thm.2}
Suppose $X$ is an orientable irreducible compact 3-manifold whose boundary $\partial X$ is either an empty set or a collection of  tori. Then
\be
\lbl{eq.main}
\limsupG  \frac{\ln t_1(\Gamma)}{[\Pi:\Gamma]} \le \frac{\vol(X)}{6 \pi}.
\ee
\end{thm}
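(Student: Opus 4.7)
The plan is to follow the standard three-step strategy for bounding torsion growth in finite coverings by $L^2$-torsion (Lück--Schick, Bergeron--Venkatesh), adapted to the trace-convergence hypothesis.

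\textbf{Step 1 (cellular bound on $t_1$).} First I would equip $X$, up to homotopy, with a CW structure having a single 0-cell and no 3-cells, which is possible since $X$ has non-empty boundary and is aspherical. The cellular chain complex of $\tilde X$ is then $0 \to C_2 \to C_1 \to C_0 \to 0$ of finitely generated free right $\BZ[\Pi]$-modules. For $\Gamma \in \cG$, tensoring with $\BZ[\GbP]$ over $\BZ[\Pi]$ yields the integral cellular complex of $X_\Gamma$, with second boundary map $\partial_2^\Gamma$. Since $\ker \partial_1^\Gamma$ is a pure subgroup of $C_1(X_\Gamma)$, the $\BZ$-torsion of $H_1(X_\Gamma;\BZ)$ is identified with the torsion of $\coker \partial_2^\Gamma$, whose order is bounded above by the product of the non-zero singular values:
$$
\log t_1(\Gamma) \;\le\; \log {\det}'(\partial_2^\Gamma).
$$

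\textbf{Step 2 (determinant approximation along $\Gamma\totr 1$).} Next I would view $\partial_2$ as a bounded operator between free $\ell^2(\Pi)$-modules and establish
$$
\limsupG \frac{\log {\det}'(\partial_2^\Gamma)}{[\Pi:\Gamma]} \;\le\; \log \det_{\mathcal{N}(\Pi)}(\partial_2).
$$
This is the Fuglede--Kadison determinant analogue of Lück's approximation theorem for $L^2$-Betti numbers, strengthened so that the approximating sequence is only required to trace-converge. The argument passes through the spectral measures $\mu_\Gamma$ of $(\partial_2^\Gamma)^*\partial_2^\Gamma$ and $\mu_\infty$ of $\partial_2^*\partial_2$: trace convergence gives weak-$*$ convergence $\mu_\Gamma \to \mu_\infty$, and the log-determinant is the integral of $\tfrac12 \log \lambda$ against $\mu$. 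The main obstacle, and the step I expect to be hardest, is controlling the singularity of $\log \lambda$ at $\lambda=0$: one needs a uniform (in $\Gamma$) upper bound on $\mu_\Gamma([0,\varepsilon])$, i.e.\ a uniform near-zero spectral estimate. Such a bound follows from the integrality of the matrix entries of $\partial_2^\Gamma$ (Kronecker-type lower bounds on the smallest nonzero singular value of an integer matrix in terms of its size and operator norm) combined with the weak convergence of $\mu_\Gamma$; this is the content of the Fuglede--Kadison approximation results extended to the trace-convergence setting.

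\textbf{Step 3 (identification with hyperbolic volume).} Finally I would identify $\log \det_{\mathcal{N}(\Pi)}(\partial_2)$ with $\vol(X)/(6\pi)$. The hypotheses on $X$ (irreducible, non-empty toroidal boundary, infinite $\pi_1$) imply $L^2$-acyclicity of $\tilde X$, so the $L^2$-torsion $\rho^{(2)}(X)$ is defined. With the chosen CW structure, the columns of $\partial_1$ have the form $h-1$ for loops $h$ of infinite order in $\Pi$, and each such column has unit Fuglede--Kadison determinant; together with the vanishing of $\partial_3$, the alternating sum defining $\rho^{(2)}(X)$ collapses to $-\log \det_{\mathcal{N}(\Pi)}(\partial_2)$. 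By additivity of $L^2$-torsion along the JSJ decomposition (the splitting tori are $L^2$-negligible because their fundamental groups are amenable), vanishing of $\rho^{(2)}$ on Seifert-fibered pieces, and Lück--Schick's formula $\rho^{(2)}(H) = -\vol(H)/(6\pi)$ on each hyperbolic piece, one obtains $\log \det_{\mathcal{N}(\Pi)}(\partial_2) = \vol(X)/(6\pi)$. Combining with Steps 1 and 2 yields \eqref{eq.main}.
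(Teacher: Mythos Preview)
Your Steps 1 and 2 are correct and match the paper's Theorem~3.5 (the determinant approximation) and Lemma~2.2(a) (the torsion bound $t(\coker\partial_2^\Gamma)\le{\det}'(\partial_2^\Gamma)$). The gap is in Step~3.

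You claim that the alternating sum defining $\rho^{(2)}(\tilde X)$ collapses to $-\log\det_{\cN(\Pi)}(\partial_2)$ because ``each column of $\partial_1$ has unit Fuglede--Kadison determinant.'' But $\partial_1$ is a single column $(1-a_1,\dots,1-a_{n+1})^T$, and the Fuglede--Kadison determinant of a column vector is \emph{not} the product of the determinants of its entries. Concretely, $\det_{\cN(\Pi)}(\partial_1)=\sqrt{\det_{\cN(\Pi)}\bigl(\sum_i(2-a_i-a_i^{-1})\bigr)}$, the determinant of a Cayley-graph Laplacian, and there is no reason this equals~$1$. (Already for $\Pi=\BZ$ and $a_1=a_2=t$ one computes $\det_{\cN(\Pi)}(\partial_1)=\sqrt2$.) What multiplicativity of $L^2$-torsion actually gives, via the orthogonally split sequence $0\to\cK\to\sC\to\cQ\to0$ used in the paper, is
\[
\log\det_{\cN(\Pi)}(\partial_2)\;=\;\log\det_{\cN(\Pi)}(J)\;+\;\log\det_{\cN(\Pi)}(\partial_1)\;=\;\frac{\vol(X)}{6\pi}\;+\;\log\det_{\cN(\Pi)}(\partial_1),
\]
so your chain of inequalities only yields the bound $\vol(X)/(6\pi)+\log\det_{\cN(\Pi)}(\partial_1)$, which is in general strictly weaker than \eqref{eq.main}.

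This is precisely why the paper does not bound $t_1(\Gamma)$ by ${\det}'(\partial_2^\Gamma)$ directly. Instead it strips off the circle subcomplex $\cK$ (built from the single generator $a_{n+1}$) and works with the \emph{square} reduced Jacobian $J$, for which L\"uck's formula $\log\det_\Pi(J)=\vol(X)/(6\pi)$ holds on the nose. The price is that one must relate $t_1(\Gamma)$ to ${\det}'(J_\Gamma)$ through the long exact sequence of $0\to\cK_\Gamma\to\sC_\Gamma\to\cQ_\Gamma\to0$, and then show (Lemma~4.1) that the connecting map $\beta_\Gamma$ and the circle complex contribute negligibly. That reduction---not the determinant approximation---is the step your outline is missing.
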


Here $\displaystyle{\limsupG f(\Gamma)}$, for a function $f: \cG\to \BR$,  is defined to be the infimum of the set of all values $L$
such that for any sequence $\Gamma_k \in \cG$ with $\Gamma_k \totr 1$, one has $ \displaystyle{ \limsup_{k\to \infty} f(\Gamma_k) \le L}.$

\begin{conjecture}\lbl{conj.1}
 Suppose $X$ is a  an orientable irreducible compact 3-manifold whose boundary $\partial X$ is either an empty set or a collection of  tori.

 (a)  Let $\cG^N$ be the set of all normal subgroups of $\Pi$ of finite index. One has
$$\limsupGn  \frac{\ln t_1(\Gamma)}{[\Pi:\Gamma]} = \frac{\vol(X)}{6 \pi}.$$

(b) (Weaker version) One has
$$\limsupG  \frac{\ln t_1(\Gamma)}{[\Pi:\Gamma]} = \frac{\vol(X)}{6 \pi}.$$

\end{conjecture}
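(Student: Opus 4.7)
The plan is to attack Conjecture \ref{conj.1} by combining the upper bound provided by Theorem \ref{thm.2} with a matching lower bound derived from $L^2$-torsion. The overall strategy is a determinant approximation scheme: identify the asymptotic torsion growth with the $L^2$-torsion $\rho^{(2)}(\widetilde X)$ of the universal cover via a L\"uck-type theorem, and then invoke the known evaluation of this invariant on the hyperbolic pieces of the JSJ decomposition.

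First I would dispose of the boundary hypothesis in the upper bound. For a closed irreducible orientable $3$-manifold $X$ with infinite fundamental group, I would present each hyperbolic piece of the JSJ decomposition as a Dehn filling of a cusped hyperbolic manifold whose volume is arbitrarily close to that of the piece, using Thurston's hyperbolic Dehn surgery theorem. Gluing gives a $3$-manifold $Y$ with toroidal boundary satisfying the hypotheses of Theorem \ref{thm.2}, with $\vol(Y)$ arbitrarily close to $\vol(X)$, together with a surgery map $Y \to X$. At the level of finite coverings this surgery only alters the cellular chain complex by a bounded number of $2$- and $3$-cells per sheet, yielding a comparison of the form $\ln t_1(\Gamma_k)/[\Pi:\Gamma_k] \le \ln t_1(\widetilde\Gamma_k)/[\Pi(Y):\widetilde\Gamma_k] + o(1)$. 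Combined with Theorem \ref{thm.2} for $Y$, this establishes the upper bound in both (a) and (b) in the closed case.

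For the lower bound, the key input is the theorem of Lott and L\"uck--Schick that a finite-volume hyperbolic $3$-manifold has $L^2$-torsion $\rho^{(2)}(\widetilde X) = -\vol(X)/(6\pi)$, together with the vanishing of $L^2$-torsion on Seifert pieces; a gluing formula along incompressible tori then gives $\rho^{(2)}(\widetilde X) = -\vol(X)/(6\pi)$ in the general JSJ setup. The determinant-class form of L\"uck approximation, applied to the cellular chain complex of $X$ with $\BZ[\Pi]$-coefficients, would then yield for a tower $\Gamma_k \totr 1$
\[
\lim_{k\to\infty} \frac{\ln t_1(\Gamma_k)}{[\Pi:\Gamma_k]} \;=\; -\rho^{(2)}(\widetilde X) \;=\; \frac{\vol(X)}{6 \pi},
\]
the factor being concentrated in degree $1$ because of Poincar\'e--Lefschetz duality (which forces $t_1 \sim t_2$) and the vanishing of $t_0,t_3$.

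The hard part, and the reason Conjecture \ref{conj.1} remains open, lies precisely in the spectral input needed to turn trace convergence into determinant convergence: one must control the integral $\int_0^{\varepsilon} \ln \lambda \, dF_k(\lambda)$ for the spectral density functions $F_k$ of the combinatorial Laplacians $\Delta_k$ on $X_{\Gamma_k}$, uniformly in $k$. This amounts to a Lehmer-type lower bound on the smallest nonzero eigenvalue of $\Delta_k$, and it is not known in general. My approach would be to first establish the equality for arithmetic hyperbolic $3$-manifolds along principal congruence towers, where Bergeron--Venkatesh provide the required spectral gap, and then propagate to arbitrary hyperbolic $3$-manifolds by commensurability and by controlled hyperbolic Dehn filling, exploiting continuity of $\vol$ under geometric deformation. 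For part (b), where arbitrary finite index subgroups are allowed, one further needs a non-normal L\"uck approximation; Farber's framework for approximating actions on coset spaces is the natural setting, and the condition $\Gamma \totr 1$ is exactly its trace-level formulation.
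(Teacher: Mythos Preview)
This statement is a conjecture that the paper does not claim to prove; the paper establishes only the inequality $\le$, and only when $\partial X\neq\emptyset$ (Theorem~\ref{thm.2}) or when $X$ is closed and tr-exhausted by rational homology spheres (Theorem~\ref{thm.4}). Your proposal should therefore be read as a strategy toward an open problem, and several of its steps are themselves open or incorrect as stated.

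Your upper bound for the closed case via drilling has a real gap: given a drilling $Y\to X$ and $\Gamma<\pi_1(X)$, the cover $X_\Gamma$ is obtained from $Y_{\widetilde\Gamma}$ by Dehn filling along up to $[\Pi:\Gamma]$ cusps, and there is no known inequality bounding $t_1(X_\Gamma)$ by $t_1(Y_{\widetilde\Gamma})$ times a subexponential factor --- filling can create or destroy torsion in an uncontrolled way. The paper's route to the closed case is entirely different (good presentations from Heegaard splittings) and isolates the obstruction as the regulator $R(\sC_\Gamma)$; see Remark~\ref{rem.regu}. Your sketch never mentions the regulator, yet even granting full determinant approximation one would only obtain the growth of $\tau^{RS}(\sC_\Gamma)$, and the identity $\tau^{RS}=\tau^H\cdot R$ means that passing to $t_1$ still requires $R$ to be negligible, which is unknown. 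On the lower bound you correctly locate the core difficulty in the small-eigenvalue regime, but the proposed fixes do not work: the Bergeron--Venkatesh theorems apply to \emph{strongly acyclic} local systems (where a uniform spectral gap is automatic), not to the trivial coefficients relevant here, and the paper explicitly notes that no hyperbolic $3$-manifold and tower is known for which the limit is even positive. ``Propagation by commensurability and controlled Dehn filling'' is not a mechanism either: neither operation is known to preserve torsion growth rates, and continuity of volume under filling says nothing about torsion. (A minor slip: for a closed oriented $3$-manifold Poincar\'e duality together with universal coefficients gives $t_2=1$, not $t_1\sim t_2$; the concentration of $\tau^H$ in degree $1$ holds, but for that reason.)
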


Similar conjectures were formulated independently by Bergeron and Venkatesh~\cite{BV} and L\"uck~\cite{L4}.

It is clear that the left hand side of \eqref{eq.main} is non-negative. Hence we have the following result.
\begin{corollary} \lbl{r.cor1}
The strong version of Conjecture \ref{conj.1} holds true for 3-manifolds $X$ satisfying the assumption of Theorem \ref{thm.2} with $\vol(X)=0$.
\end{corollary}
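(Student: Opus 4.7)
The plan is to sandwich the limit superior in \eqref{eq.main} between $0$ from below (which is trivial because $t_1(\Gamma)\ge 1$) and $\vol(X)/(6\pi)=0$ from above (which is exactly Theorem \ref{thm.2}). Since the strong form of Conjecture \ref{conj.1} is part (a), stated in terms of $\cG^N$, one also has to pass from $\limsupG$ to $\limsupGn$.

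First, I would apply Theorem \ref{thm.2} with $\vol(X)=0$ to conclude that $\limsupG \frac{\ln t_1(\Gamma)}{[\Pi:\Gamma]}\le 0$. To transfer this bound to the normal version, I would observe that $\cG^N\subseteq \cG$, so any sequence $(\Gamma_k)\subseteq\cG^N$ with $\Gamma_k\totr 1$ is a fortiori a sequence in $\cG$ with $\Gamma_k\totr 1$; consequently the collection of admissible upper bounds $L$ appearing in the definition of $\limsupGn$ contains the corresponding collection for $\limsupG$, and therefore $\limsupGn\le\limsupG\le 0$. For the matching lower bound, $t_1(\Gamma)$ is the order of a finite abelian group, hence $f(\Gamma):=\frac{\ln t_1(\Gamma)}{[\Pi:\Gamma]}\ge 0$; since $\pi_1(X)$ is residually finite, exhaustive nested sequences $(\Gamma_k)\subseteq\cG^N$ with $\Gamma_k\totr 1$ do exist, and any such sequence satisfies $\limsup_k f(\Gamma_k)\ge 0$, which forces every admissible $L$ to be $\ge 0$ and thus $\limsupGn\ge 0$.

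Combining the two bounds gives $\limsupGn \frac{\ln t_1(\Gamma)}{[\Pi:\Gamma]}=0=\vol(X)/(6\pi)$, which is Conjecture \ref{conj.1}(a); the weaker part (b) follows at once from $\limsupGn\le\limsupG\le 0=\limsupGn$. There is no serious obstacle here --- the corollary is essentially a consistency check observing that Theorem \ref{thm.2} already pins both sides to zero when $\vol(X)=0$, and the only mild point to check is the inequality $\limsupGn\le\limsupG$ forced by $\cG^N\subseteq \cG$.
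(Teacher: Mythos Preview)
Your proposal is correct and follows the same idea as the paper: the paper simply remarks that the left-hand side of \eqref{eq.main} is obviously non-negative (since $t_1(\Gamma)\ge 1$), so together with Theorem~\ref{thm.2} one gets equality when $\vol(X)=0$. You are just more explicit than the paper about the passage from $\limsupG$ to $\limsupGn$ via $\cG^N\subseteq\cG$, a point the paper leaves implicit.
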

For example, if $X$ is the complement in $S^3$ of a tubular neighborhood of a torus link, then the strong conjecture holds for $X$. A 3-manifold $X$ satisfying the assumption of Theorem \ref{thm.2} has $\vol(X)=0$ if and only if $X$ is spherical or a graph manifold.

A statement similar to Corollary \ref{r.cor1}, applicable to a large  class of manifolds,
was proved in \cite{L4}.

\subsection{Branched covering}  We can extend Theorem \ref{thm.2} in the following direction. Suppose $K$ is a knot in $S^3$ and $X=S^3\setminus N(K)$, where $N(K)$ is an open tubular neighborhood of $K$. For a subgroup $\Gamma \le \Pi= \pi_1(X)$ let $\hat X_\Gamma$ be the branched $\Gamma$-covering of $S^3$, branched along $K$. This means, $\hat X_\Gamma$ is obtained from $X_\Gamma$ by attaching solid tori to $\partial X_\Gamma$ in such a way that any lift of a meridian of $K$ bounds a disk in $\hat X_\Gamma$.

\def\tor{\mathrm{Tor}}

\begin{thm}\lbl{thm.2a} Suppose $K$ is a knot in $S^3$. For a subgroup $\Gamma \le \Pi= \pi_1(S^3\setminus K)$ let $\hat X_\Gamma$ be the branched $\Gamma$-covering of $S^3$, branched over $K$. Then
$$ \limsupG  \frac{\ln |\tor_\BZ H_1(\hat X _\Gamma,\BZ)|}{[\Pi:\Gamma]} \le \frac{\vol(X)}{6 \pi} .$$
\end{thm}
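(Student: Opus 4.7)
The plan is to reduce Theorem~\ref{thm.2a} to Theorem~\ref{thm.2} applied to the knot complement $X = S^3 \setminus N(K)$. First, I would observe that $\hat X_\Gamma$ is built from $X_\Gamma$ by Dehn-filling each of its $N := N(\Gamma)$ boundary tori with a solid torus whose meridian glues to a lift of the meridian $\mu$ of $K$. Mayer--Vietoris on the decomposition $\hat X_\Gamma = X_\Gamma \cup \bigsqcup_{i=1}^{N} V_i$ yields the natural isomorphism
$$H_1(\hat X_\Gamma, \BZ) \;\cong\; H_1(X_\Gamma, \BZ)\big/\langle m_1, \ldots, m_N\rangle,$$
where $m_i \in H_1(X_\Gamma,\BZ)$ is the class of the $i$-th lift of $\mu$. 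This reduces the problem to bounding the torsion of a finitely generated abelian group modulo $N$ specific elements.

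Writing $M = H_1(X_\Gamma, \BZ)$, $T = \tor_\BZ M$, and $B = \langle m_1, \ldots, m_N\rangle$, the short exact sequence $0 \to B \to M \to M/B \to 0$ together with the standard analysis of free and torsion parts gives
$$|\tor_\BZ(M/B)| \;\le\; |T| \cdot |T'|,$$
where $T'$ denotes the torsion of $(M/T)/\bar B$ for $\bar B$ the image of $B$ in the free part $M/T$. To estimate $|T'|$ uniformly in $\Gamma$, I would equip $H_1(X_\Gamma, \BR)$ with the inner product inherited from the cellular $L^2$-inner product on $C_1(X_\Gamma, \BR)$. Since each $m_i$ is represented by a single cellular lift of a fixed cycle for $\mu$ in $X$, the bound $\|m_i\|_{L^2} \le K_0$ with $K_0 = K_0(X)$ is immediate. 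Hadamard's inequality on the covolume of $\bar B$ in its $\BR$-span, together with a matching lower bound for the covolume of its saturation $\bar B^{\mathrm{sat}}$ in $M/T$, should yield $\ln |T'| \le C(X) \cdot N$.

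The other essential ingredient is the asymptotic $N(\Gamma)/[\Pi : \Gamma] \to 0$ as $\cG \ni \Gamma \to 1$. The count $N(\Gamma)$ equals the number of orbits of the peripheral subgroup $H = \pi_1(\partial X) \cong \BZ^2$ acting by right translation on $\Gamma \backslash \Pi$. For each fixed $m \ge 1$, any coset in an orbit of size $\le m$ has stabilizer equal to one of the finitely many subgroups $K_1,\ldots,K_{l(m)}$ of $H$ of index $\le m$; choosing $e \ne h_j \in K_j$, the cosets stabilized by $h_j$ number $[\Pi : \Gamma]\, \tr_{\Gamma \backslash \Pi}(h_j) = o([\Pi : \Gamma])$ by the trace condition. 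Hence the number of cosets in orbits of size $\le m$ is $o([\Pi : \Gamma])$, while the number of orbits of size $> m$ is at most $[\Pi : \Gamma]/m$, so $\limsupG N(\Gamma)/[\Pi : \Gamma] \le 1/m$ for every $m$ and therefore equals $0$.

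Assembling the pieces,
$$\frac{\ln |\tor_\BZ H_1(\hat X_\Gamma, \BZ)|}{[\Pi : \Gamma]} \;\le\; \frac{\ln |\tor_\BZ H_1(X_\Gamma, \BZ)|}{[\Pi : \Gamma]} + \frac{C(X) \cdot N(\Gamma)}{[\Pi : \Gamma]},$$
and taking $\limsupG$ with Theorem~\ref{thm.2} controlling the first term and the orbit asymptotic killing the second delivers the inequality. The main obstacle I anticipate is the uniform version of the Hadamard bound $\ln |T'| \le C(X) \cdot N$: the upper bound on $\|m_i\|_{L^2}$ is routine, but the lower bound on the covolume of $\bar B^{\mathrm{sat}}$ requires careful interaction between the cellular $L^2$-metric and the $\BZ$-lattice structure on $H_1(X_\Gamma)$. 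If only a weaker estimate $\ln |T'| = O(N \log N)$ is available, the orbit-counting step would need to be refined to give a matching rate $N(\Gamma) = o([\Pi : \Gamma]/\log[\Pi : \Gamma])$.
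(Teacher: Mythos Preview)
Your Mayer--Vietoris identification, the torsion bound $|\tor(M/B)|\le |T|\cdot|T'|$, and the orbit-counting for $N(\Gamma)/[\Pi:\Gamma]\to 0$ are all fine. The gap is exactly where you flag it: the bound $\ln|T'|\le C(X)\,N$. The inner product on $H_1(X_\Gamma,\BR)$ inherited from $C_1(X_\Gamma)$ is \emph{not} integral on the lattice $F=H_1(X_\Gamma,\BZ)/\tor$; the covolume of $F$ in this metric is a regulator, and nothing in your setup prevents $\vol(\bar B^{\mathrm{sat}})$ from being exponentially small in $[\Pi:\Gamma]$. Hadamard bounds the numerator $\vol(\bar B)$ by $K_0^N$, but without a matching lower bound on the denominator you cannot conclude. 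This is not a technicality you can repair by sharpening the orbit count to $o([\Pi:\Gamma]/\log[\Pi:\Gamma])$: the missing lower bound is of regulator type, precisely the obstruction singled out in Remark~\ref{rem.regu} for the closed case.

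The paper sidesteps this by \emph{not} reducing to Theorem~\ref{thm.2}. It reuses the orthogonally split sequence $0\to\cK_\Gamma\to\sC_\Gamma\to\cQ_\Gamma\to 0$ from that proof, choosing $a_{n+1}$ to be the meridian. The subgroup $U\subset H_1(X_\Gamma)$ generated by the meridian lifts is then exactly the image of $H_1(\cK_\Gamma)\to H_1(\sC_\Gamma)$, so the long exact sequence yields $0\to H_1(\hat X_\Gamma)\to H_1(\cQ_\Gamma)\to H_0(\cK_\Gamma)$; since $H_0(\cK_\Gamma)$ is free one gets $\tor_\BZ H_1(\hat X_\Gamma)=\tor_\BZ(\coker J_\Gamma)$. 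This is bounded by ${\det}'(J_\Gamma)$ via Lemma~\ref{r.est}(a) applied inside the \emph{integral} metric group $\BZ[\GbP]^n$, where the covolume of every sublattice is automatically $\ge 1$. No correction term, no regulator, and in fact your step~4, while correct, is not used at all.
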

Here $|\tor_\BZ H_1(\hat X _\Gamma,\BZ)|$ is the size of the $\BZ$-torsion part of $H_1(\hat X_\Gamma,\BZ)$.

\def\Grhs{{\cG}^{\mathrm{rhs}}}
\def\Ga{\Gamma}
\def\Gkone{\Gamma_k \totr 1}

\subsection{On trace convergence}  In the case when $(\Gamma_k)_{k=1}^\infty$ is a nested sequence of subgroups of $\Pi$ of finite index, the definition of $\Gamma_k \totr 1$ was introduced in \cite{Farber}.
Even when   $(\Gamma_k)_{k=1}^\infty$ is not nested,  $\Gamma_k \totr 1$ if and only if
the representations
$\rho_k: \Pi \to \BC[\GbP]$ with $k=1,2,\dots,$ form an {\em arithmetic approximation of $\Pi$} in the sense of \cite[Definition 9.1]{Farber}.

For a discrete group $\Pi$, there exists a sequence  $(\Gamma_k)_{k=1}^\infty$ of subgroups of finite index such that $\Gamma_k \totr 1$ if and only if $\Pi$ is residually finite.

It turns out that the limit $\Gamma_k \totr 1$, for not necessarily nested sequences, is closely related to known limits in the literature.

{\em The case of normal subgroups.}
When each of $\Gamma_k$ is a normal subgroup of $\Pi$ of finite index, i.e. $\Gamma_k \in \cG^N$, the definition of $\Gkone$ simplifies.
 Suppose $(\Gamma_k)_{k=1}^\infty$ is a nested sequence of normal subgroups of $\Pi$ of finite index. Then
$\Gkone$ if and only if $\cap  \Gamma_k=\{e\}$, i.e. $(\Gamma_k)_{k=1}^\infty$ is exhaustive.

More generally, suppose $(\Gamma_k)_{k=1}^\infty$ is a not necessarily nested sequence of normal subgroups of $\Pi$ of finite index. Then $\Gkone$ if and only if $\cap_{m=1}^\infty \cup_{k=m}^\infty \Gamma_k =\{e\}$. This means, $\Gkone$ if and only if for every non-trivial $g\in \Pi$, $g$ eventually does not belong to $\Gamma_k$.

{\em Relation to sofic approximation.} Suppose $\Gamma_k$ is a sequence of subgroups of finite index of $\Pi$. Then
 $\Gkone$ if and only if $\Sigma = \{\sigma_k\}_{k=1}^\infty$, where $\sigma_k$ is the map from $\Pi$ to the permutation group of $\GbP$, is a {\em sofic approximation} to $\Pi$. For the definition of sofic approximation see \cite{Bowen}.
 Sofic approximation has been important in dynamical system theory. Sofic groups, which are groups having a sofic approximation (not necessarily coming from actions on cosets),  were introduced by Gromov in 1999.

{\em Relation to Benjamini-Schramm convergence.}
Suppose $X$ is a hyperbolic 3-manifold. Raimbault observed that $\Gkone$ if and only if $X_{\Gamma_k}$ Benjamini-Schramm (BS) converge to the hyperbolic space $\mathbb H^3$ in the sense of \cite{ABBGNRS}.
The notion of BS convergence was introduced in \cite{ABBGNRS} for more general sequences of manifolds, not necessarily coverings of a fixed manifold.

\subsection{Growth of Betti numbers} Suppose $X$ is a connected finite CW-complex with fundamental group $\Pi$. For a subgroup $\Gamma \le \Pi$ of finite index let $b_j(\Gamma)$ be the $j$-th Betti number of $X_\Gamma$.  %Assume  $\Gkone$, where each $\Gamma_k$ is a subgroup of $\Pi=\pi_1(X)$ of finite index.

If $(\Gamma_k)_{k=1}^\infty$ is an exhaustive nested sequence of normal subgroups of $\Pi$, then Kazhdan \cite{Kazhdan,Gromov} showed that
\be
\lbl{eq.Kazh}
\limsup_{k\to \infty} \frac{b_j(X_{\Gamma_k})}{[\Pi:\Gamma_k]} \le b_j^{(2)}(\tilde X),
\ee
where $b_j^{(2)}(\tilde X)$ is the $L^2$-Betti number of the universal covering $\tilde X$. For the definition of $L^2$-Betti number and  $L^2$-invariants in general, see \cite{Luck:book}. L\"uck \cite{Luck:approximation} then showed that one actually has a much stronger result
\be
\lbl{eq.Luck}
\lim_{k\to \infty} \frac{b_j(X_{\Gamma_k})}{[\Pi:\Gamma_k]} = b_j^{(2)}(\tilde X).
\ee

Farber extended L\"uck's result  \eqref{eq.Luck} to the trace convergence, $\Gkone$ (no normal subgroups). The following is a special case of \cite[Theorem 9.2]{Farber}: If $(\Gamma_k)_{k=1}^\infty$ is a sequence of subgroups of $\Pi$ of finite index such that $\Gkone$, then

\be
\lbl{eq.Farber}
\lim_{k\to \infty} \frac{b_j(X_{\Gamma_k})}{[\Pi:\Gamma_k]} = b_j^{(2)}(\tilde X).
\ee

Theorem \ref{thm.2} is an analog of Kazhdan's inequality for the growth of the torsion part of
 the homology, and Conjecture \ref{conj.1} asks for analogs of L\"uck's and Farber's equalities.
In general,  the question about the growth of the torsion part of the homology is
considered much more difficult than  the similar question for the free part of the homology;
 it is closely related
to the difficult question of approximating the Fuglede-Kadison determinant by finite determinants, see
\cite[Question 13.52]{Luck:book} and the discussion there. There has been little progress in this direction.
At the moment of this writing, there is even no known example of a hyperbolic 3-manifold (closed or cusped) and
a sequence $\Gkone$ such that $\displaystyle{
\lim_{k \to \infty} \frac{\ln t_1(\Gamma_k)}{[\Pi:\Gamma_k]} >0}$.

\subsection{Abelian covering}
In the abelian case, results on the growth of the torsion part
of the homology are much more satisfactory, as below.
Proofs of results on the growth of the torsion part of the homology are more complicated than those on
the growth of the
Betti numbers.

Suppose $\tilde X \to X$ is a regular covering with the group of deck transformations equal to $\BZ^n$.
Here $X$ is a finite CW-complex. We choose generators $z_1,\dots,z_n$ of  $\BZ^n$ and identify $\BZ[\BZ^n]\equiv \BZ[z_1^{\pm 1}, \dots, z_n^{\pm 1}]$.

Let $\cG$ in this case be the set of all subgroups $\Gamma \le \BZ^n$ of finite index.
Again $X_\Gamma$ is the corresponding finite covering, and $b_j(\Gamma)$ and $t_j(\Gamma)$, for $\Gamma\in \cG$, are respectively the rank  and the size of the $\BZ$-torsion part of $H_j(X_\Gamma,\BZ)$.
In this abelian case, $\Gkone$ is equivalent to $\lim_{k\to \infty} \la \Gamma_k \ra = \infty$,
where $\la \Gamma \ra$, for $\Gamma \le \BZ^n$, is the smallest among norms of non-zero elements in $\Gamma$.

In this case, we have a positive answer to an analog of Conjecture \ref{conj.1}, as follows. For any $j \ge 0$,
\be
\lbl{eq.Le1}
\limsup_{\BZ^n \ni \Gamma \to 1} \frac{\ln t_j(X_{\Gamma})}{[\BZ^n:\Gamma]} = \mu\left( \Delta(H_j(\tX,\BZ)  ) \right).
\ee
Here $\mu(f)$, for a non-zero $f\in \BZ[z_1^{\pm 1},\dots,z_n^{\pm 1}]$, is the Mahler measure of $f$, and
$\Delta(H_j(\tX,\BZ)  )$ is the first non-trivial Alexander polynomial of $H_j(\tX,\BZ)$, considered as a $\BZ[z_1^{\pm 1},\dots,z_n^{\pm 1}]$-module.
For details and discussions of related results, see \cite{Le:abelian}.
The proof of \eqref{eq.Le1} in \cite{Le:abelian} uses tools from algebraic geometry, commutative algebra, and also results from algebraic dynamics and cannot be generalized to non-commutative cases.

For an arbitrary finite CW-complex with fundamental group $\Pi$ and a fixed  index $j$,
we don't know what would be the upper limit of the left hand side of \eqref{eq.Le1}, with
$\BZ^n$ replaced by $\Pi$. Only when $X$ has some geometric structure,
like 3-dimensional manifolds, do we have results like Theorem \ref{thm.2}.

\subsection{Related results} The growth of torsion parts of homology of coverings or more
generally bundles over a  fixed manifold has attracted a lot of attention lately.
For related results see e.g. \cite{ABBGNRS,BD,BV,BSV,MM,Muller,MeP}.
Probably C. Gordon \cite{Gordon} was the first to ask about the asymptotic growth of
torsion of homology in finite, albeit abelian, coverings. Recently R. Sauer \cite{Sauer}
gave an  upper bound for the growth rate of $t_j(\Gamma)$, for
closed aspherical Riemannian manifolds,
but the bound is not expected to be sharp like in the one in Theorem \ref{thm.2}.

\subsection{Organization} In Section \ref{sec:volume} we recall the definition of
geometric determinant, introduce the notion of metric abelian groups,
 and use them to get an estimate for the torsion parts in exact sequences.
 In Section \ref{sec:L2torsion} we recall the definition of the Fuglede-Kadision
 determinant and prove an upper bound for growth of geometric determinants by Fuglede-Kadison determinant.
  Finally in Section \ref{sec:proof} we give proofs of the main results.

\subsection{Acknowledgements} The author would like to thank N. Bergeron, M. Boileau,
N. Dunfield, S. Friedl, W. L\"uck,  J. Raimbault, A. Reid for enlightening discussions,
J. Meumertzheim for correcting a mistake in Lemma \ref{r.circle}, and anonymous referees for many corrections and suggestions.
The results of the paper were reported in various conferences including the Banff workshop
 ``Low-dimensional Topology and number Theory"~(2007), Fukuoka conference
 ``Low dimensional topology and number theory"~(2009),
 Columbia University conferences ``Geometric Topology" (2009 \& 2013, see  \cite{Le:slides}), and the author would like to thank the organizers of these conferences for inviting him to talk.
The author is partially supported by NSF grant DMS-1105678.

\def\trlim{ \overset \tr \longrightarrow }

\section{Geometric determinant, lattices, and volume in  inner product spaces}
\lbl{sec:volume}
In this section we recall the definition of the geometric determinant, introduce the notion of metric abelian groups, and  establish certain results on upper bounds for torsion parts in exact sequences.
For a finitely-generated  abelian group $A$ let
$\rk(A)$ and $t(A)$ be  respectively the rank of $A$ and the size of the $\BZ$-torsion part of $A$.
For a ring $R$ let
  $\Mat(n \times m, R)$ denote the $R$-module  of all $n\times m$ matrices with entries  in $R$.
\subsection{Geometric determinant}
 For a linear map $f: V_1\to V_2$, where each $V_i$ is a finite-dimensional inner product space the {\em geometric determinant} $\det'(f)$ is defined to be the product of all non-zero
singular values of $f$. Recall that $x\in \BR$ is singular value of $f$ if $x \ge 0$ and $x^2$ is an eigenvalue of $f^* f$. By convention $\det'(f)=1$ if $f$ is the zero map.
We always have $\det'(f) >0$. It is easy to show that $\det'(f)= \det'(f^*)$.

Since the maximal singular value of  $f$ is the norm $||f||$, we have
\be
\lbl{eq.norm}
 {\det}'(f) \le ||f|| ^{\rk(f)} \qquad \text{with convention $0^0=1$.}
 %\label{eq.103s}
 \ee

 If $M\in \Mat(n\times m , \BC)$, we define $\det'(M)= \det'(f)$, where $f: \BC^n \to \BC^m$ is the linear operator defined by $M$, and $\BC^n$ and $\BC^m$ are standard Hermitian spaces.
 \begin{remark}
The geometric meaning of ${\det}' f$ is the following. The map $f$ restricts to a linear isomorphism $f'$ from  $\im(f^*)$ to $\im(f)$, each is an inner product  space.
Then $\det' f= |\det (f')|$, where the ordinary determinant $\det(f')$ is calculated using orthonomal bases of the inner product  spaces.
\end{remark}

\begin{lemma}\lbl{r.cyclic}
Suppose $g$ is a generator of a cyclic group $A$ of order $l$. Define an inner product structure on  $\BC[A]$
such that $A$ is an orthonormal basis. Let $A$ act on $\BC[A]$ by left multiplication.
Then $\det'(1-g)= l$.
\end{lemma}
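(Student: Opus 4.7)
The plan is to diagonalize $g$ explicitly via the discrete Fourier transform and then read off the singular values of $1-g$ directly.

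First I would observe that left multiplication by $g$ permutes the orthonormal basis $A$ (sending $a \mapsto ga$), so $g$ acts as a unitary operator on $\BC[A]$. In particular $g$ is normal, and the same is true of $1-g$, so the singular values of $1-g$ coincide with the absolute values of its eigenvalues.

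Next I would diagonalize $g$. Since $g^l = e$, the spectrum of $g$ is contained in the set of $l$-th roots of unity $\{\zeta^k : k=0,1,\ldots,l-1\}$ where $\zeta = e^{2\pi i/l}$. The character basis $e_k = \frac{1}{\sqrt{l}} \sum_{j=0}^{l-1} \zeta^{-jk} g^j$ (for $k=0,\ldots,l-1$) gives an orthonormal eigenbasis with $g \cdot e_k = \zeta^k e_k$, confirming that every $l$-th root of unity appears exactly once as an eigenvalue. Hence the eigenvalues of $1-g$ are $1-\zeta^k$ for $k=0,1,\ldots,l-1$, with $k=0$ giving the unique zero eigenvalue and the remaining $l-1$ being nonzero.

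Finally I would compute the product of the nonzero singular values:
\[
{\det}'(1-g) = \prod_{k=1}^{l-1} |1-\zeta^k|.
\]
Using the factorization $x^l - 1 = \prod_{k=0}^{l-1}(x-\zeta^k)$, dividing by $x-1$ and evaluating at $x=1$ gives $\prod_{k=1}^{l-1}(1-\zeta^k) = 1 + 1 + \cdots + 1 = l$. Since this product is already a positive real number, taking absolute values inside the product does not change its value, so ${\det}'(1-g) = l$. No step here looks to be a serious obstacle; the only thing to be careful about is passing from eigenvalues to singular values, which is justified by the normality of $1-g$.
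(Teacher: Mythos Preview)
Your proof is correct and follows essentially the same approach as the paper: both diagonalize $g$ to obtain the eigenvalues $\zeta^k$ and then evaluate $\prod_{k=1}^{l-1}|1-\zeta^k|=l$ via the factorization of $x^l-1$. The only cosmetic difference is that the paper computes $f^*f = 2 - g - g^{-1}$ explicitly and reads off its eigenvalues $|1-\zeta^k|^2$, whereas you invoke normality of $1-g$ to pass directly from eigenvalues to singular values; this is a minor shortcut, not a different argument.
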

\begin{proof}
Let $f= 1-g$. Then $f^*= 1-g^{-1}$, and $f^* f= 2 -g -g^{-1}$. Let $\zeta= \exp(2\pi i/l)$. Since $\zeta^k$, with $k=0,1,\dots,l-1$, are  all eigenvalues of $g$, the eigenvalues of $f^*f$ are $2- \zeta^k -\zeta^{-k}= |1-\zeta^k|^2$, with $k=0,1, \dots, l-1$. Excluding the 0 value, we have
$$ \det'(f)=\left| \prod_{j=1}^{l-1} (1-\zeta^l)  \right| =l,$$
where the last identity follows since for any complex number $z$ one has
$$ \prod_{j=1}^{l-1} (z-\zeta^k) = \sum_{k=0}^{l-1} z^k =
\frac{z^l-1}{z-1}.$$
(The above  holds since $\zeta^k$, with $k=1,\dots, l-1$, are roots of $\frac{z^l-1}{z-1}$.)
\end{proof}

\subsection{Volume of lattices} We recall here some well-known facts about volumes of lattices in inner product spaces.

Suppose $V$ is a finite-dimensional inner product space. In this paper, any discrete finite-rank abelian subgroup $\Lambda\subset V$ will be called a {\em lattice} in $V$. Note that we don't require $\Lambda$ to be of maximal rank as in many texts.
For a lattice $\Lambda  \subset V$ with $\BZ$-basis $v_1,\dots,v_l$ define
$$ \vol(\Lambda ) = |\det \left ( (v_i,v_j)_{i,j=1}^l \right)|^{1/2} >0.$$
That is, $\vol(\Lambda)$ is the volume of the parallelepiped spanned by a set of basis vectors.
By convention, the volume of the zero space is 1.

 It is clear that if $\Lambda_1 \subset \Lambda_2$ are lattices in $V$ of the same rank, then
\be
\lbl{eq.u10}
|\Lambda_2/\Lambda_1| = \frac{\vol(\Lambda_1)}{\vol(\Lambda_2)}.
\ee

Let $V$ and  $ W$ be finite-dimensional inner product spaces. Suppose $\Lambda$ is a lattice in $V$ of rank equal the dimension of $V$ and $f: \Lambda\to W$ is
an abelian group homomorphism such that the image $\IM(f)$  is discrete in $W$. The kernel $\ker(f)$ and the image $\IM(f)$ are lattices in respectively $V$ and $W$.
Note that $f$  extends uniquely to a linear map $\tilde f: V \to W$, and we put $\det'(f)= \det'(\tilde f)$. We have

\begin{align} \lbl{eq.detvol}
\vol(\ker f) \,  \vol (\IM(f))  = {\det}' (f) \, \vol(\Lambda).
\end{align}

\def\id{\mathrm{id}}
\subsection{Metric abelian groups} \lbl{sec:metricgroup}
A {\em metric abelian group} is a finitely generated free abelian group $A$ equipped with an inner product on $A\ot_\BZ \BR$.
An {\em integral metric abelian group} is a metric abelian group $A$  such that the inner product  of any two elements $x , y \in A$ is an integer.

Suppose $A$ is a metric abelian group. Then any subgroup
$B\le A$ inherits a metric from $A$. Besides, if $A$ is an integral metric abelian, then the induced metric on $B$ is also integral.

What we will use is the following obvious property of an integral metric group: If $A$ is an integral metric group and $B \le A$, then $\vol(B) \ge 1$.

The following is a basic example.
Suppose $X$ is a finite CW-complex. The cellular $\BZ$-complex $\sC(X)$ of $X$ consists
of free abelian groups $\sC_j(X)$, which are free abelian groups with bases the sets of
cells (of corresponding dimensions) of $X$. We equip $\sC_j(X)$ with a metric such that the mentioned basis
 is an orthonormal basis.  All these metrics are integral.

Suppose $\al: A \to B$ is a group homomorphism between metric abelian groups. Then $\al\ot \id : A \ot_\BZ \BR \to B\ot_\BZ \BR$ is a linear map between finite-dimensional inner product spaces. We define
$$ \det'(\al):= \det' (\al \ot_\BZ \id).$$

\subsection{Torsion estimate} Recall that for a finitely-generated abelian group $A$, $t(A)$ is the size of its $\BZ$-torsion part.
\begin{lemma} \lbl{r.est} Suppose $F_1, F_2$ are integral metric abelian groups.

(a) If  $\al: F_1 \to F_2$ is a group homomorphism, then
\begin{align}
\lbl{eq.ine1}
t(\coker(\al)) \le   \vol(\im(\al)) &\le  {\det}'(\al) \vol(F_1).
\\
\lbl{eq.ine1a}
t(\coker(\al)) \vol(\ker(\al)) & \le {\det}'(\al) \vol(F_1).
\end{align}

(b) If $F_1 \overset \beta \longrightarrow F_2 \to A \to B$ is an exact sequence, where
$A$ and $B$ are finitely-generated abelian groups, then
$$ t(A) \le t(B) \, {\det}' (\beta) \, \vol (F_1).$$
\end{lemma}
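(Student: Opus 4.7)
The plan is to prove (a) first, which is essentially a restatement of the volume formula \eqref{eq.detvol} together with the integrality hypothesis, and then derive (b) by cutting the four-term exact sequence into a short exact sequence and combining with (a).

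For the first inequality $t(\coker(\alpha))\le \vol(\im(\alpha))$, I would let $L=\im(\alpha)\subset F_2$ and let $\bar L = \{x\in F_2 : nx\in L \text{ for some }n\ge 1\}$ be the saturation of $L$ in $F_2$. Then $\bar L/L$ is precisely the $\BZ$-torsion part of $F_2/L=\coker(\alpha)$, so $t(\coker(\alpha))=[\bar L : L]$. Since $L$ and $\bar L$ have equal rank, formula \eqref{eq.u10} gives $[\bar L:L]=\vol(L)/\vol(\bar L)$. The key observation is that $\bar L$, being a subgroup of the integral metric group $F_2$, is itself integral, and integrality forces its Gram determinant to be a positive integer; hence $\vol(\bar L)\ge 1$ and $t(\coker(\alpha))\le \vol(L)=\vol(\im(\alpha))$.

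For the second inequality in \eqref{eq.ine1} and for \eqref{eq.ine1a}, I would apply \eqref{eq.detvol} to $\alpha$ (viewed as a homomorphism from the full-rank lattice $F_1\subset F_1\otimes_\BZ\BR$) to obtain
\be
\vol(\ker(\alpha))\,\vol(\im(\alpha))={\det}'(\alpha)\,\vol(F_1).
\ee
Since $\ker(\alpha)\le F_1$ inherits an integral structure, the same Gram-determinant argument gives $\vol(\ker(\alpha))\ge 1$, so $\vol(\im(\alpha))\le {\det}'(\alpha)\,\vol(F_1)$. Multiplying the first inequality by $\vol(\ker(\alpha))$ then yields \eqref{eq.ine1a} immediately.

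For (b), let $\gamma:F_2\to A$ denote the second arrow and set $A'=\im(\gamma)\subset A$. Exactness at $F_2$ gives $A'\cong F_2/\im(\beta)=\coker(\beta)$, while exactness at $A$ identifies $A'$ with $\ker(A\to B)$, so $A/A'$ embeds into $B$. From the short exact sequence $0\to A'\to A\to A/A'\to 0$ and the standard fact that torsion orders are submultiplicative (the sequence $0\to A'_{\mathrm{tors}}\to A_{\mathrm{tors}}\to (A/A')_{\mathrm{tors}}$ is exact), I obtain $t(A)\le t(A')\,t(A/A')\le t(\coker(\beta))\,t(B)$. Applying part (a) to $\beta$ gives $t(\coker(\beta))\le {\det}'(\beta)\,\vol(F_1)$, which completes the proof. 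I do not expect a serious obstacle here; the only point that requires care is checking that every subgroup of an integral metric group is again integral, which guarantees the volumes of $\ker(\alpha)$ and of the saturation $\bar L$ are at least $1$.
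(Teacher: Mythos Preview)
Your proof is correct and follows essentially the same approach as the paper: both use the saturation $\bar L=\overline{\im\alpha}$ together with the integrality bound $\vol(\bar L)\ge 1$ for part (a), and both reduce part (b) to the short exact sequence $0\to\coker(\beta)\to A\to B'\to 0$ with $B'\subset B$, combined with the submultiplicativity of torsion orders and part (a).
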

\begin{proof} (a) Let $\overline{\im \al} = \left((\im \al)\ot_\BZ \BQ \right) \cap F_2$, where both $(\im\, \al)\ot_\BZ \BQ$ and $ F_2$ are considered as subsets of $F_2 \ot _\BZ \BQ$. Then \eqref{eq.u10} shows that
\be
\lbl{eq.u11}
t(\coker(\al)) = \frac{\vol(\im(\al))}{\vol(\overline{\im \al} )} \le \vol(\im(\al)),
\ee
where the last inequality follows since $\vol(\overline{\im \al} ) \ge 1$ due to the integrality of the metric.
By \eqref{eq.detvol},
$$ \vol(\im(\al))  = \frac{\det'(\al) \vol(F_1)}{ \vol(\ker(\al)) } \le \det'(\al) \vol(F_1),$$
which, together with \eqref{eq.u11}, proves \eqref{eq.ine1}.

(b) First we observe that if
$$ 0\to A_1 \to A_2 \to A_3 \to 0$$
is an exact sequence of finitely generated abelian groups, then
$$ t(A_2) \le t(A_1) t(A_3).$$

From the assumption we have the following exact sequence
$$ 0\to \coker(\beta)  \to A \to B' \to 0,$$
where $B' \le B$. By the above observation,
$$t(A) \le  t(B')\,  t (\coker \beta) \le t(B)\,  t (\coker \beta) \le t(B) \, {\det}' (\beta) \, \vol (F_1),$$
where the last inequality follows from \eqref{eq.ine1a}. This completes the proof of the lemma.
\end{proof}
\subsection{Torsion of a matrix}
 For a matrix $M \in \Mat(m\times n, \BZ)$ let $t(M):= t(\coker M)$, where $M$ is also considered as the homomorphism  $\BZ^m \to \BZ^n$ given by $x \to xM$.

 \def\cC{\mathcal C}
\begin{proposition} \lbl{r.tor5}
 Suppose $M \in \Mat(m\times n, \BZ)$.

(a)  Let $M^*$ be the transpose of $M$, then $t(M)= t(M^*)$.

(b)    One has $t(M) \le \det' (M)$.

(c)   Suppose $(\cC,\partial)$ is a chain complex of free finitely generated $\BZ$-modules. Then $t(H_i(\cC))= t(\coker \partial_{i+1})$.
\end{proposition}
\begin{proof}

(a) Note that  $t(M)$ is equal to the greatest common divisor of all the minors of $M$  of size $r$ where $r=\rk(M)$, see \cite[Section 4.2]{Turaev}. From here we have $t(M)= t(M^*)$.

 Part (b) is a special case of  \eqref{eq.ine1}. For (c), notice that the torsion part of either $H_i(\cC)$ or $\coker \partial _{i+1}$ is equal to $\overline{\im \partial _{i+1}}  / \im \partial_{i+1}$.
\end{proof}
  \def\Tor{\mathrm{Tor}}

\def\cN{\mathcal N}
\section{Fuglede-Kadision determinant} \lbl{sec:L2torsion}
We recall here the definition and establish some properties of the Fuglede-Kadison determinant. We will introduce the Fuglede-Kadison determinant only for a class of operators which we will need in this paper.
For a detailed treatment of the Fuglede-Kadison determinant, the reader should consult the book \cite{Luck:book}.
We prove that the Fuglede-Kadison determinant of a matrix with entries in $\Bpi$ serves as an upper bound for the growth of the geometric determinants of a sequence of finite matrices which approximate the original matrix well enough. This extends a result of L\"uck.

Recall that $\Pi$ is the fundamental group of a finite CW-complex.
\subsection{Fuglede-Kadison determinant of a density function} For simplicity we use the following definition which is more restrictive than the one  in \cite{Luck:approximation}.
\begin{definition}
\lbl{def.det}
(a) A right continuous function
$$F:[0,\infty) \to [0,\infty)$$ is called a {\em density function} if

(i) $F$ is increasing (i.e. $F(\lambda) \le F(\lambda')$ if $\lambda \le \lambda'$), and

(ii) There is a constant $K$ such that $F$ is constant on interval $[K,\infty)$.
%(\lambda)=F(C)$ for all $\lambda > C$.

\noindent (b) A density function $F$ is said to be in the determinantal class if the integral
$
\int_{0^+}^\infty \ln(\lambda) dF
$
 exists as a real number. If $F$ is in the determinant class,
 define its {\em determinant} by
$$ \det(F):= \exp\left( \int_{0^+}^\infty \ln(\lambda) dF \right).$$
If $F$ is not in the determinantal class, define $\det(F)=0$.
\end{definition}

(In probability theory, a density function is also known as a cumulative distribution function.)

Let $K$ be the number in Condition (ii) of Definition \ref{def.det}. If $F$ is a density function in the determinantal class, then one has (see \cite[Lemma 3.15]{Luck:book})
\be
\lbl{eq.detpart}
 \ln \det(F)= (F(K)-F(0)) \ln(K) -  \int_{0^+}^K \frac{F(\lambda)- F(0)}{\lambda} d \lambda.
 \ee
%Actually, $F$ is in the determinantal class if and only if the integral on the right hand side of \eqref{eq.detpart} exists as a real number.

 For an increasing function $F$ we define
$$ F^+(\lambda)= \lim_{\ve \to 0^+} F(\lambda + \ve).$$
 If $F:[0,\infty) \to [0,\infty)$ is a not necessarily right continuous function satisfying conditions (i)--(ii), then $F^+$, which is $F$ made right continuous, is a density function.

\subsection{Von Neumann algebra of a group and trace function} Let $\ell^2(\Pi)$ be the Hilbert space with orthonormal basis $\Pi$. In other words, $\ell^2(\Pi)$ is the set of all formal sums $\sum_{g\in \Pi} c_g g$, with $c_g \in \BC$ and $\sum_{g\in \Pi} |c_g|^2 < \infty$, with inner product $\la g, g'\ra= \delta_{g,g'}$.
For every positive integer $n$, $(\ell^2(\Pi))^n$ inherits a Hilbert structure, where
$$\la(x_1,\dots,x_n), (y_1,\dots,y_n)\ra= \sum_{j=1}^n \la x_j, y_j \ra.$$
We will consider $(\ell^2(\Pi))^n$ as a left $\Pi$-module by the left multiplication.

By definition, the von Neumann algebra $\cN(\Pi)$ of $\Pi$ is
the $\BC$-algebra of bounded $\Pi$-equivariant operators from $\ell^2(\Pi)$ to $\ell^2(\Pi)$.

For $f\in \cN(\Pi)$ its trace is defined by
$$ \tr_\Pi(f)= \la e, f( e) \ra,$$
where $e$ is the unit of $\Pi$. More generally, suppose $f: (\ell^2(\Pi))^n \to (\ell^2(\Pi))^n$ is a bounded $\Pi$-equivariant operator, define its trace by
$$ \tr(f) = \sum_{j=1}^n \la e_j,  f e_j \ra,$$
where $e_j= 0^{j-1} \times e \times 0^{n-j} \in (\ell^2(\Pi))^n$.

\subsection{Fuglede-Kadision determinants}
Suppose $B$ is an $n\times k$ matrix with entries in $\BC[\Pi]$.
Let $\CR_B: (\ell^2(\Pi))^n  \to (\ell^2(\Pi))^k$ be the bounded $\Bpi$-linear operator defined by $x \to x B$.
Then $(\CR_B)^*\CR_B= \CR_{BB^*}$, where $(\CR_B)^*$ is the adjoint operator, and $B^*$ is obtained from $B$ by the   transpose followed by the conjugation map on $\BC[\Pi]$ given by $ \sum c_i g_i \to  \sum \bar{c_i} g_i^{-1}$, with $\bar{c_i}$ being the complex conjugation of $c_i$.
We define the norm $\|B\|= \| \CR_B\|$.

Let $\{ P(\lambda), \lambda \in [0,\infty)\}$ be the right continuous spectral family of the positive operator $\CR_{BB^*}: (\ell^2(\Pi))^n  \to (\ell^2(\Pi))^n, x \to x B B^*$. {\em The spectral density function of $B$} is defined by
$$F(\lambda)= F_B(\lambda) := \tr_\Pi(P(\lambda)).$$

Then $F:[0,\infty) \to [0,\infty)$ is a density function (Definition \ref{def.det}). We say that $B$ is {\em  in the  determinantal class} if $F$ is in the determinantal class. Define
$$ \det_\Pi(B)= \sqrt {\det (F)}.$$
When $B$ is in the determinantal class $\det_\Pi(B)$ is a positive real number.

\def\nr{\mathrm{nr}}
 \subsection{Relation between Fuglede-Kadison determinant and geometric determinant} \lbl{sec:trivialgroup}

Any matrix $B\in \Mat(n \times k, \BC)$ (with complex entries) can be considered as an element of $\Mat(n \times k, \BC[\Pi])$, where $\Pi$ is the trivial group, since $\BC[\Pi]= \BC$. In this case, $\tr_\Pi$ is the usual trace,  $B$ is always in the determinantal class, and (see \cite[Example 3.12]{Luck:book})
\be
\lbl{eq.trivialgroup11}
 \det_\Pi(B)= \det'(B).
 \ee

 For $\lambda \ge 0$ the spectral density function $F_B(\lambda)$ is the number of eigenvalues of $\CR_{B B^*}$ which are less than or equal to $\lambda$, counted with multiplicity.
The density function  $F_B: [0,\infty) \to [0,\infty)$ is a bounded, right continuous, step function, and $F_B(\lambda)= \nr(B)$, which is the number of rows of $B$,  if $\lambda \ge \|B\|^2$.   Besides, $F_B(0)=\dim \ker(\CR_B)$.

\subsection{Universal bound for norm} \def\cO{\mathcal O}  Suppose $B\in \Mat(n\times k, \BC[\Pi])$. For any  $\Gamma\in \cG$ let $B_\Gamma: \BC[\GbP]^n \to \BC[\GbP]^k$ be the induced $\BC$-homomorphism defined by $x\to xB$. We always equip $\BC[\GbP]$ (and consequently $\BC[\GbP]^n$)  with the Hermitian structure in which $\GbP$ is an orthonormal basis.

One can easily find a universal upper bound for the norm of all operators induced from one acting on $\BZ[\Pi]^n$.
\begin{lemma}
\lbl{r.bound} Suppose $B\in \Mat(n\times k, \BC[\Pi])$. There is a constant $K$ (depending on $B$) such that  $\| B\| <K$ and
$ \|B_\Gamma \| < K
$
for any  $\Gamma\in \cG$. \end{lemma}

\begin{proof}
When $\Gamma$ is a normal subgroup, the statement was proved in \cite{Luck:approximation} with $K=nk \max_{i,j} |B_{ij}|_1$, where for an element $x=\sum c_g g \in \BC[\Pi]$ one sets $|x|_1= \sum |c_g|$. The easy  proof in \cite{Luck:approximation} also works  for our more general case.
\end{proof}

  \def\G{\Gamma}
\subsection{Upper limit of growth of determinants}
Suppose $B\in \Mat(n \times k, \BZ[\Pi])$. In \cite{Luck:approximation}, L\"uck shows that if $(\G_m)_{m\ge 1}$ is an exhaustive nested sequence of normal subgroups of $\Pi$, then
 $$ \limsup_{m\to \infty}  \frac{\ln \det'(B_{\G_m})}{[\Pi:\G_m]}  \le \ln\det_\Pi(B).$$

\def\simz{\overset 0 \sim}
\def\tal{\tilde \al}
\def\hal{\hat \al}
 We will show that a similar result holds if  the sequence $(B_{\G_m})_{m \ge 0}$ is replaced by a sequence of matrices that approximates well enough the matrix $B$.

\begin{definition}  \lbl{def.appro}
 
Let $B$  be a matrix with entries in $\BZ[\Pi]$. A sequence $(B_m,N_m)_{m\ge 1}$ is said to tracely approximate $B$, if each $B_m$ is a matrix with complex entries, each $N_m$ is a positive number, and there exists $K>0$ such that
all the  following conditions (i)--(iii) are satisfied.\\
  (i)  $\|B\|,  \|B_m\| <K$. % and  $B_m$ has  less than $K N_m $ rows, for large enough $m$. 
  \\
(ii)
 For every polynomial $p(z)\in \BC[z]$ we have
 $
  \displaystyle{\tr_\Pi(p(B B^*))= \lim_{m\to \infty} \frac{\tr(p(B_m B^*_m))}{N_m}}.
 $ \\
   (iii) If  $F$ is the spectral density of $B$ and $F_m$ is the spectral density of $B_m$, then
  $ F(0)= \displaystyle{ \lim_{m\to \infty} \frac{F_m(0)}{N_m}}.$
\end{definition}

\begin{theorem}\lbl{r.detbound1} Suppose a sequence $(B_m,N_m)_{m\ge 1}$  tracely approximates a matrix of determinantal class $B\in \Mat(n \times k, \BZ[\Pi])$.
 Then
  $$ \limsup_{m\to \infty}  \frac{\ln \det'(B_m)}{N_m}  \le \ln\det_\Pi(B).$$
\end{theorem}
 The proof is a simple modification of L\"uck's proof and will be given in Appendix \ref{sec.a1}. Actually, the definition of trace approximation  is constructed so that L\"uck's proof works.

 As a corollary, we get the following special case.
 %, which is also a special case of a result of Farber \cite{Farber}.

 \begin{theorem}\lbl{r.detbound}
 Let $B\in \Mat(n \times k, \BZ[\Pi])$ be of   determinantal class and  $(\G_m)_{m \ge 1}$ be a sequence of subgroups of finite index such that $\G_m \totr 1$. Then  $(B_{\G_m}, [\Pi:\G_m])_{m\ge 1}$ tracely approximates $B$.  Consequently,
$$ \limsup_{m\to \infty}  \frac{\ln \det'(B_{\Gamma_m})}{[\Pi:\Gamma]}  \le \ln\det_\Pi(B).$$
\end{theorem}

\begin{proof} Let $K$ be the  number appeared in
 Lemma \ref{r.bound}, then we have (i) of Defininion \ref{def.appro}.

   Let $p(z)\in \BC[z]$. Then $(p(B B^*))_{\G_m} = p (B_m B^*_m)$. Hence we have (ii) by the  definition of  $\G_m \totr 1$.

  The conclusion of (iii), which is a generalization of \cite[Theorem 0.1]{Luck:approximation}, is a special case of \cite[Theorem 9.2]{Farber}. See Equation \eqref{eq.Farber}. In \cite{Farber}, the results are formulated for the case of a chain complex coming from universal covering of a finite CW complex, but the proof there works for the general case when the boundary operators have entries in $\BZ[\Pi]$.
  \end{proof}

   \def\nr{\mathrm{nr}}
 \def\nc{\mathrm{nc}}
 \def\hf{\hat f}
   \def\hg{\hat g}

 \subsection{Perturbation of a sequence of matrices}

 The following statement shows that in many cases, a small perturbation of a sequence
  $(B_m,N_m)_{m\ge 1} $ tracely approximating $B$ gives another sequence which also tracely approximates $B$.  For a matrix $B$ let  $\nc(B)$ and $\nr(B)$ denote respectively the number of columns and the number of rows of $B$.

 \begin{proposition}\lbl{r.detbound3}
  Suppose $(B_m,N_m)_{m\ge 1}$ tracely approximates a matrix $B$ with entries in $\BZ[\Pi]$. For each $m \ge 1$, assume $d_m$ is a positive integer, and 
  $B_m$ is an upper left corner submatrix of 
  a matrix $A_m$ which has integer entries  (i.e. $B_m$ is obtained from an integer matrix $A_m$ by removing several last columns and several last rows). Also assume that
 \begin{itemize}

 \item   there exists $L >0$ such that $\|A_m\| <L$ for all $m$,
 \item  $\nc(A_m) -\nc(B_m)< d_m$ and $\nr(A_m) -\nr(B_m)< d_m$, and
 %\item 
 \be
 \lbl{eq.size}
  \lim_{m\to \infty} \frac{d_m}{N_m}=0.\ee
   \end{itemize}
 Then the sequence $(A_m,N_m)_{m\ge 1}$ also tracely approximates $B$. Consequently,
 \be
 \lbl{eq.upp} \limsup_{m\to \infty}  \frac{\ln \det'(A_m)}{N_m}  \le \ln\det_\Pi(B).\ee
\end{proposition}
\def\hA{\hat A}
\def\hB{\hat B}
 \begin{proof} Replacing $L$ by a bigger number, we can assume that $K<L$, where $K$ is the number appeared in Definition~\ref{def.appro} of $(B_m,N_m)$.
  We will prove (i)-(iii) of Definition  \ref{def.appro} hold for $(A_m, N_m)$, with $K$ replaced by $L$. %Replacing $L$ by a bigger number, we can assume that $L>K+1$, where $K$ is the number appeared in Definition~\ref{def.appro} of $(B_m,N_m)$.

 (i) clearly holds, since $\|B \| < K <L$ and $ \|A_m\| < L$.
  % Condition \eqref{eq.size} implies $d_m < N_m$ for big enough $m$. By the size difference assumption,
% $$ \nr(A_m) < \nr(B_m) +d_m < KN_m + d_m  < LN_m,$$
% where the last two inequalities hold  for big enough $m$. 
 %
 %This  proves (i), with $K$ replaced by $L$.

 Let us prove (iii). For any matrix $A$ with complex entries,
 \be
 \lbl{eq.0s}
  F_A(0)= \dim \ker(\CR_A) = \nr(A) - \rk(A).
 \ee
 Let $F_m, G_m$ be  respectively the spectral density function of $B_m, A_m$.
Since $B_m$ is a submatrix of $A_m$ and the size difference is $<d_m$, we have
 $$|\nr(B_m) - \nr(A_m)| < d_m, \  0 \le \rk(A_m) - \rk(B_m)  <2 d_m.$$
 From the above inequalities and  \eqref{eq.0s}, we have
 $$ | G_m(0) - F_m(0)|  < 3d_m.$$
 Combining with \eqref{eq.size}, we have
 $\displaystyle{ \lim_{m\to \infty}\frac{G_m(0)}{N_m} = \lim_{m\to \infty}\frac{F_m(0)}{N_m} = F(0)},$ proving (iii).

 Let us prove (ii). The idea is as follows.  Since entries are integers and $\|A_m\| \le L$, on each row or each column of $A_m$ there cannot be more $L^2$ non-zero entries. From here we will show that for all indices $i$ except for a small set,  $(p(B_m B_m^*))_{ii}= (p(A_m A_m^*))_{ii}$. This will show $|\tr(p(B_m B_m^*))- \tr(p(A_m A_m^*))|$ is small compared to $N_m$.

Let go to the details.
\def\Diff{\mathrm{Diff}} Let $\BZ_+= \{ 1,2,3,\dots \}$ and let $\Mat_0(\BZ_+\times \BZ_+, \BC)$ be the set of all $\BZ_+\times \BZ_+$ matrix (with complex entries) with finite support, i.e. all entries are 0 except for a finite number of them. Each matrix  $\al\in  \Mat_0(\BZ_+\times \BZ_+, \BC)$ is a linear endomorphism of the standard Hilbert space $\ell^2$, and hence one can define its norm $\|\al\|$. We define $\tr(\al)= \sum_i \al_{ii}$, which is finite due to the finite support.
 Let $R_i(\al)$ and $C_j(\al)$ be respectively the $i$-th row of $\al$ and the $j$-th column of $\al$.

 Suppose  $\al,\al'\in  \Mat_0(\BZ_+\times \BZ_+, \BC)$. Let
 $$ \Diff(\al,\al')= \{ i\in \BZ_+ \mid {R_i(  \al) } \neq {R_i( {\al'}) } \ \text{or } \  {C_i(  \al) } \neq {C_i( {\al'}) }\}.$$
  We write $\rho(\al,\al') \le (d,L)$ if $\|\al\|, \|\al'\| < L$ and  $|\Diff(\al,\al')|<d$.
  \begin{lemma}
  \lbl{r.lem5}
  Suppose  $\al, \beta,\al',  \beta' \in \Mat_0(\BZ_+\times \BZ_+, \BC)$ and $\rho(\al,\al') \le (d,L)$,  $\rho(\beta,\beta') \le (d,L)$.

  (a) For $c_1, c_2 \in \BC$ one has $\rho(c_1 \al+ c_2 \beta, c_1 \al'+ c_2 \beta') \le (C_1 d, C_1)$, where $C_1=C_1(L, c_1, c_2)$ is a constant depending  only on  $L, c_1, c_2$.

  (b)  If the entries of $\al,\al',\beta,\beta'$ are integers, then $\rho(\al\beta, \al'\beta') \le (C_2 d, C_2)$, where $C_2=C_2(L)$ is a constant depending  only on $L$.
\end{lemma}
   \begin{proof}  (a) Let  $C_1= \max(2, (|c_1|+ |c_2|)L))$.
It clear that $\| c_1 \al+ c_2 \beta \| < (|c_1|+ |c_2|)L)\le C_1$. Since
   $$ \Diff( c_1 \al+ c_2 \beta, c_1 \al'+ c_2 \beta') \subset \Diff(\al,\al') \cup \Diff(\beta,\beta'),$$
   we have   $|\Diff( c_1 \al+ c_2 \beta, c_1 \al'+ c_2 \beta')| < 2d\le C_1 d $. Part (a) is proved.

  (b)  Let $C_2= 4L^4+2$.
  It is clear that $\|\al \beta\|< L^2\le C_2$. Similarly  $\|\al' \beta'\|< C_2$.

   Let $I_1=\Diff(\al,\al'), J_1=\Diff(\beta,\beta')$. By assumption, $|I_1|, |J_1| < d$.
   For a subset $S \subset \BZ_+$ denote $S^c= \BZ_+ \setminus S$.
   If  $i\in (I_1)^c$ and $j\in (J_1)^c$ then $ij$-entry of $\al\beta$ is
    $$ (\al \beta)_{ij}= R_i(\al) \cdot C_j(\beta) =   {R_i(\al')} \cdot    {C_j(\beta')}= (\al'\beta')_{ij}.$$
   This shows $\al\beta$ and $\al'\beta'$ have the same $(I_1)^c \times (J_1)^c$ submatrix, which is denoted by $\gamma$.

   Since $\|\al\beta\| <L^2$, the norm of each row or each column is $<L^2$, which implies on each row or on each column there are at most $L^4$ non-zero entries. It follows that in any collection of $d$ rows (or $d$ columns)  of $\al\beta$, all the entries are 0 except for at most  $d L^4$ of them. From the $(I_1)^c \times (J_1)^c$ submatrix $\gamma$ we can recover the full matrix $\al\beta$  by adding back less than $d$ columns and less than $d$ rows.  In these $d$ columns there are at most $dL^4$ non-zero rows, and let $I_2$ be the set of indexes of those none-zero rows. We have $|I_2| < d L^4$. Now if  $i\not \in (I_1\cup I_2)$ then $R_i(\al\beta)$ is the 0-extension of $R_i(\gamma)$.  Similarly, there is a subset $I'_2\subset \BZ_+$ with $|I'_2| < d L^4$ such that if  $i\not \in (I_1\cup I'_2)$, then $R_i(\al'\beta')$ is the 0-extension of $R_i(\gamma)$. Hence if $i\not \in (I_1\cup I_2 \cup I'_2)$, then $R_i(\al\beta)= R_i(\al'\beta')$.

   Similarly, there are subsets $J_2, J_2'\subset \BZ_+$ with $|J_2|, |J'_2| < d L^4$ such that if $j\not \in (J_1\cup J_2 \cup J'_2)$, then $C_j(\al\beta)= C_j(\al'\beta')$.

   Let $I= I_1\cup I_2 \cup I'_2 \cup J_1\cup J_2 \cup J'_2$. Then $|I| \le 2d+ 4 dL^4\le d C_2 $.
     We have $R_i(\al\beta)= R_i(\al'\beta')$ and $C_i(\al\beta)= C_i(\al'\beta')$ if $i\not \in I$. This implies $\Diff(\al\beta, \al'\beta') \subset I$, and
   $$|\Diff(\al\beta, \al'\beta') |\le |I| < d C_2 .$$
  This completes the proof of the lemma. \end{proof}
  Let us continue with the proof of (ii). For a finite matrix $\al$ let $\hal\in \Mat_0(\BZ_+\times \BZ_+, \BC)$ be its 0-extension, i.e. $(\hal)_{ij} =\al_{ij}$ if $\al_{ij}$ exists, otherwise $(\hal)_{ij} =0$.  The map $\al\to \hal$ is linear and preserves the trace, the norm, and the matrix product.
  Since $A_m$ is obtained from $B_m$ by adding less than $d_m$ rows and then less than $d_m$ columns, and on each row or each column there are no more than $L^2$ non-zero elements, we have
  $$|\Diff(\widehat{B_m}, \widehat{A_m}) | < d_m + 2L^2d_m = d_m(2L^2+1).$$
  It follows that
  $$\rho(\widehat{B_m}, \widehat{A_m}) \le (d_m(2L^2+1), L), \quad \rho(\widehat{B_m^*}, \widehat{A_m^*}) \le (d_m(2L^2+1), L).$$
    Applying part (b) of  Lemma \ref{r.lem5} repeatedly to products of $\widehat{B_m}, \widehat{B_m^*}, \widehat{A_m}, \widehat{A_m^*}$, then applying part (a)  of Lemma \ref{r.lem5}, we see that there is a constant  $C=C(L,p)$ depending only on $L$ and the polynomial $p$, such that
  $$ \rho(\widehat{p(B_m B_m^*)}, \widehat{p(A_m A_m^*)}  ) \le (C d_m, C).$$
 The absolute value of each entry of either $\widehat{p(B_m B_m^*)}$ or $\widehat{p(A_m A_m^*)}$ is less than $C$ since $C$ is an upper bound for the norm.

 %Let us compare $\tr (p(B_m B_m^*))$ and $\tr (p(A_m A_m^*))$.
 %Note that $\tr (p(B_m B_m^*)) = \tr (\widehat{p(B_m B_m^*)}) $.
  From the definition, if $i\not \in \Diff(\widehat{p(B_m B_m^*)}, \widehat{p(A_m A_m^*)  })$, then $(p(B_m B_m^*))_{ii}= (p(A_m A_m^*))_{ii}$.  Since  $| \Diff(\widehat{p(B_m B_m^*)}, \widehat{p(A_m A_m^*)  }) |< C d_m$  and the absolute value of each entry is less than $C$, we have
  $$ |\tr (\widehat{p(B_m B_m^*)} ) - \tr (\widehat{p(A_m A_m^*)}) | <2 C ( C d_m) = 2 C^2 d_m.$$
  Since $\lim_{m\to \infty } d_m/N_m=0$ and $\tr(\al) = \tr (\hal)$, we conclude that
   $$   \lim_{m\to \infty} \frac{ \tr (p(B_m B_m^*)) }{N_m} = \lim_{m\to \infty} \frac{ \tr (p(A_m A_m^*)) }{N_m} = \tr_\Pi (p(B B^*)),$$
    which completes the proof of (ii). \end{proof}

\begin{remark} %In Proposition \ref{r.detbound3} we assume that $B_m$ is an upper left corner submatrix of $A_m$,  from $A_m$ be removing several last columns and last rows. 
\lbl{rem.upp}
Since ${\det}' (A_m)$ does not change if one permutes the rows or the columns of $A_m$, Inequality \eqref{eq.upp} still holds if in  Proposition \ref{r.detbound3} we replace the assumption ``$B_m$ is an upper left corner submatrix of $A_m$" by the weaker assumption ``$B_m$ is a submatrix of $A_m$". (Actually, one can also prove that Proposition \ref{r.detbound3} holds under this weaker assumption.)
\end{remark}

\def\cN{{\mathcal N}}

\subsection{Hyperbolic volume and Fuglede-Kadison determinant} \lbl{sec:goodp}
Suppose $X$ is an irreducible orientable compact 3-manifold with infinite fundamental group and with boundary $\partial X$  either empty or a collection of tori. We define now {\em  good presentations} of $\Pi=\pi_1(X)$ and their {\em reduced Jacobians}.

First assume that $\partial X \neq \emptyset$.
Then $X$ is  homotopic to a 2-dimensional finite  CW-complex $Y$ which has one zero-cell. Suppose there are $n$ two-cells. Since the Euler characteristic is 0,
there must be $n+1$ one-cells, denoted by $a_1,\dots, a_{n+1}$. Then $\Pi:= \Pi_1(X) =\Pi_1(Y)$ has a presentation 
\be
\lbl{eq.good1}
 \Pi =\la a_1,\dots, a_n, a_{n+1} \mid r_1, \dots, r_n \ra,
 \ee
 where $r_j$ is the boundary of the $j$-th two-cell. We call such a presentation  a {\em good presentation for $\Pi$} if $a_{n+1}$  is an element of infinite order in $\Pi$.
 For a good presentation \eqref{eq.good1}, define its {\em reduced Jacobian} to be the square matrix
$$ J = \left( \frac{\partial r_i}{\partial a_j}  \right)_{i,j=1}^n \in \Mat(n \times n, \Bpi),$$
where $\frac{\partial r_i}{\partial a_j}$ is the Fox derivative.

Now assume that   $X$ is a closed oriented 3-manifold. Assume $X=H\cup H'$ is a Heegaard splitting of $X$, where each of $H$ and $H'$ is a handlebody of genus $g$. There is a graph $G\subset H$ with 1 vertex and $g$ loop-edges $a_1,\dots, a_g$ such that $H$ is a regular neighborhood of $G$. Similarly there is a graph  $G'\subset H'$ with 1 vertex and $g$ loop-edges $a'_1,\dots, a'_g$ such that $H'$ is a regular neighborhood of $G'$. There is a collection $\{ D'_1,\dots, D'_g\}$ of properly embedded disks in $H'$ which cuts $H'$ into balls  such that $D'_i$ meets $a'_i$ transversally at 1 point and does not meet $a'_j$ for $j\neq i$. We assume that $a_g$ and $a'_g$ are non-trivial in $\pi_1(X)$.
A presentation of $\Pi=\pi_1(X)$ can be given by
\be \lbl{eq.good2}
 \Pi =\la a_1,\dots, a_g \mid r_1, \dots, r_g \ra,
 \ee
 where $r_i$ is given by the boundary of $D'_i$, see e.g. \cite{Hempelbook}. We call such a presentation a {\em good presentation} of $\pi_1(X)$, and 
 define its {\em reduced Jacobian}  to be the square matrix
$$ J = \left( \frac{\partial r_i}{\partial a_j}  \right)_{i,j=1}^{g-1} \in \Mat((g-1) \times (g-1), \Bpi).$$

 We quote here an important result which follows from a result of L\"uck and Schick \cite{LS} (appeared as Theorem 4.3 in \cite{Luck:book}) and L\"uck \cite[Theorem 4.9]{Luck:book}.

\begin{theorem}\lbl{thm.3}
Suppose $X$ is an irreducible orientable compact 3-manifold with infinite fundamental group and with boundary either empty or a collection of tori. Let $J$ be the reduced Jacobian  of a good presentation of $\Pi=\pi_1(X)$.
Then
$$ \ln \det_\Pi(J)=  \vol(X)/6\pi.$$
\end{theorem}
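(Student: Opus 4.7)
The plan is to extract Theorem~\ref{thm.3} from the $L^2$-torsion framework for 3-manifolds, assembling the two cited results. The geometric input is as follows: Lück-Schick (\cite[Theorem 4.3]{Luck:book}) computes the $L^2$-torsion of a finite-volume hyperbolic 3-manifold as $\rho^{(2)}(\tilde X) = -\vol(X)/(6\pi)$, while Lück's gluing theorem (\cite[Theorem 4.9]{Luck:book}) asserts additivity of $L^2$-torsion along the incompressible tori of the JSJ decomposition, with Seifert-fibered pieces contributing zero. Combined, these give $\rho^{(2)}(\tilde X) = -\vol(X)/(6\pi)$ for every $X$ in our class. The remaining task is purely algebraic: express $\rho^{(2)}(\tilde X)$ through $\det_\Pi(J)$.

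First I would set up the cellular $\BZ[\Pi]$-chain complex from the good presentation. In the bounded case this reads
$$ 0 \to \BZ[\Pi]^n \xrightarrow{\partial_2} \BZ[\Pi]^{n+1} \xrightarrow{\partial_1} \BZ[\Pi] \to 0, $$
with $\partial_1$ the column $(a_j-1)_{j=1}^{n+1}$ and $\partial_2$ the full Fox Jacobian $(\partial r_i/\partial a_j)$ on all $n+1$ generators. Under our hypotheses (irreducibility, tori boundary, infinite $\Pi$) this complex is $L^2$-acyclic, so its $L^2$-torsion is defined and equals an alternating combination of the Fuglede-Kadison determinants of the boundary operators. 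I would then reduce that combination to $\ln\det_\Pi(J)$. Each generator $a_j$ has infinite order in $\Pi$ by hypothesis, so the standard identity $\det_\Pi(1-g) = 1$ for infinite-order $g$ makes $\partial_1$ contribute trivially. For $\partial_2$, the Fox relation $\partial_2 \cdot \partial_1 = 0$ pins its $(n+1)$-th column as a $\BZ[\Pi]$-combination of the first $n$ columns controlled by $\partial_1$; combined with $L^2$-acyclicity and the multiplicativity of $\det_\Pi$ in exact sequences of Hilbert $\cN(\Pi)$-modules, this identifies $\ln\det_\Pi(\partial_2)$ with $\ln\det_\Pi(J)$ up to another $\det_\Pi(1 - a_{n+1}) = 1$ factor. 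Summing gives $\ln\det_\Pi(J) = -\rho^{(2)}(\tilde X) = \vol(X)/(6\pi)$.

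The closed case follows the same blueprint starting from the four-term complex $0 \to \BZ[\Pi] \to \BZ[\Pi]^{n+1} \to \BZ[\Pi]^{n+1} \to \BZ[\Pi] \to 0$ coming from the Heegaard presentation. Poincar\'e duality matches $\partial_3$ with the transpose of $\partial_1$, so $\partial_3$ contributes trivially for the same infinite-order reason, and a second application of the Fox relation pares the middle Jacobian down to the square $J$ by eliminating both the extra row and the extra column. The technical obstacle I anticipate is precisely this algebraic reduction from a non-square Jacobian to the square $J$: while $\det_\Pi$ of a square operator in the determinant class is well behaved, extracting $\det_\Pi(J)$ from $\partial_2$ requires justifying the relevant block decomposition, checking the determinant-class hypothesis at every stage, and verifying that the eliminated $1 - a_i$ factors have Fuglede-Kadison determinant exactly $1$ rather than merely trivial $L^2$-Betti contribution. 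Once this bookkeeping is in place, the geometric input of Lück-Schick together with Lück's gluing theorem supplies $\rho^{(2)}(\tilde X) = -\vol(X)/(6\pi)$ and the theorem follows.
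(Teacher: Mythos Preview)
Your approach is correct and matches the paper's: both combine the identification $\ln\det_\Pi(J) = -\rho^{(2)}(\tilde X)$ with the L\"uck--Schick computation $\rho^{(2)}(\tilde X) = -\vol(X)/6\pi$. The only difference is that the paper simply cites the first identity as having been explicitly established in L\"uck's proof of \cite[Theorem~4.9]{Luck:book} (in \cite{Luck:3manifold}), whereas you sketch the algebraic reduction yourself via the Fox relation and the vanishing of $\ln\det_\Pi(1-g)$ for infinite-order $g$.
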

\begin{proof} In the proof of Theorem 4.9 of \cite{Luck:book} appeared in \cite{Luck:3manifold} it was explicitly shown that $\ln\det_\Pi(J)= -\rho^{(2)}(\tilde X) $, where $\rho^{(2)}(\tilde X)$ is the (additive) $L^2$-torsion of the universal covering of $X$.

By \cite[Theorem 4.3]{Luck:book}, $\rho^{(2)}(\tilde X)=  -\vol(X)/6 \pi$. Hence $\ln\det_\Pi(J) = \vol(X)/6\pi$.
\end{proof}

\begin{remark}
In the formulation of \cite[Theorem 4.3]{Luck:book} there is an assumption that $X$ satisfies the conclusion of Thurston Geometrization Conjecture, which is redundant now due Perelman's celebrated result. Besides, there is a requirement that the boundary of $X$ be incompressible. But if a torus component of $X$ is compressible, then $X$ must be a solid torus, for which all the results are trivial.
\end{remark}

\section{Proofs of main results} \lbl{sec:proof}

\subsection{Growth of functions} Recall that $\cG$ is the set of all subgroups of $\Pi$ of finite index. Suppose $f,g: \cG \to \BR_{>0}$ are functions on $\cG$ with positive values. We say that $f$ has {\em negligible growth} if
$$ \limsupG \left (f(\Gamma) \right)^{1/[\Pi:\Gamma]} \le 1.$$
We will write
$ f \tle g$
if $f/g$ has negligible growth.

\subsection{Chain  complexes of coverings} \lbl{sec:chainC}
Suppose $X$ is a connected finite CW-complex with fundamental group $\Pi$, and $\tX$ is its universal covering. Then $\tX$ inherits a CW-complex structure from $X$, where the cells of $\tX$ are lifts of cells of $X$. The  action of $\Pi$ on $\tX$ preserves the CW-structure and commutes with the boundary operators. Let $\sC(\tX)$ be the  chain $\BZ$-complex  of the CW-structure of $\tX$. Then  $\sC_j(\tX)$ is the free $\BZ$-module with basis the set of all $j$-cells of $\tX$. We will identify $\sC_j(\tX)$ with  $\Bpi^{n_j}$, a free $\Bpi$-module, as follows.

We assume that

(i) $X$  has only one 0-cell $e^0$, and

(ii) for any $j$-cell $e$  of $X$ with $j\ge 1$,  one has $e^0= \chi_e((1,0,\dots,0))$, where
$\chi_e: D^j \to X$ is the  characteristic map of $e$. Here
$ D^j=\{ x \in \BR^j, \|x \| \le 1\}$
is the standard unit $j$-disk.

Choose a lift $\tilde e^0$ of $e^0$. Condition (ii) shows that  for every $j$-cell $e$
there is a unique lift $\tilde e$ defined by using the lift of the characteristic map which
sends $(1,0,\dots,0)$ to $\tilde e^0$.
Let $e_1^j,\dots,e^j_{n_j}$ be an ordered set of all $j$-cells of $X$, then
$$ \sC_j(\tX)= \bigoplus_{l=1}^{n_j} \Bpi \cdot \tilde e^j_{l},$$
and we use the above equality to identify $\sC_j(\tX)$ with $\Bpi^{n_j}$.

\no{Thus, the identification $\sC_j(\tX)\equiv\Bpi^{n_j}$, for a CW-complex $X$ satisfying (i) and (ii) depends on the choice of a lift of $e^0$ and an ordering on the set of $j$-cells, for every $j\ge 1$.
}

We will write an element $x\in \Bpi^n$ as a row vector $x=(x_1,\dots,x_n)$ where each $x_j \in \Bpi$. Note that $\Bpi^n$ can be considered as a left $\Bpi$-module or a right $\Bpi$-module. We will consider $\Bpi^n$ as a left $\Bpi$-module unless otherwise stated.
If $B\in \Mat(n\times m, \Bpi)$ is an $n\times m$ matrix with entries in $\Bpi$, then the right multiplication by $B$ defines a $\Bpi$-linear map from $\Bpi^n$ to $\Bpi^m$, and every $\Bpi$-linear map $\Bpi^n \to \Bpi^m$ arises in this way.

The boundary operator $\partial_j: \sC_j(\tX) \to \sC_{j-1}(\tX)$ is given by a $n_j \times n_{j-1}$ matrix with entries in $\Bpi$; by abusing notation we also use $\partial_j$ to denote this matrix.
The chain complex $\sC(\tX)$  has the form
$$
\sC(\tX)= \left(  \dots \to \Bpi^{n_{j+1}} \overset{\partial_{j+1}} \longrightarrow \Bpi^{n_{j}} \overset{\partial_{j}} \longrightarrow \Bpi^{n_{j-1}} \to \dots \Bpi^{n_1}  \overset{\partial_{1}} \longrightarrow \Bpi^{n_0}  \overset{\partial_{0}} \longrightarrow 0\right).
$$

The 2-skeleton of $X$ gives a presentation of the fundamental group
\be
\pi_1(X)= \la a_1,\dots, a_n \mid  r_1, \dots , r_m\ra,
\ee
where $a_i$ is the represented by the 1-cell $e^1_i$, and $r_j$ is the boundary of the 2-cell $e^2_j$, written as a product of $a_i$'s.  In this case, $\partial_1$ is the $n\times 1$ matrix whose $i$-entry is $1-a_i$, and $\partial_2$ is the $m \times n$ matrix whose $ij$-entry is $\frac{\partial r_i}{\partial a_j}$, see \cite[Claim 16.6]{Turaev}.

Suppose $\Gamma\le \Pi$ is a subgroup and $X_\Gamma$ is the corresponding covering. Then $X_\Gamma$ inherits a CW-structure from $X$, and its chain $\BZ$-complex is exactly $\BZ[\GbP] \ot_{\Bpi} \sC(\tX)$. Here we consider $\BZ[\GbP]$ as a right $\Bpi$-module.

In general, if $\sC$ is a chain complex over $\Bpi$ of left $\Bpi$-modules and $\Gamma\le \Pi$, then we denote by $\sC_\Gamma$ the chain $\BZ$-complex $\BZ[\GbP] \ot_{\Bpi} \sC$.

\subsection{Circle complex} Fix a non-trivial element $a$ of a residually finite group $\Pi$. Let $\cS$ be  the following chain $\BZ[\Pi]$-complex
$$ 0 \to \BZ[\Pi] \overset{\partial_1}\longrightarrow \BZ[\Pi] \overset{\partial_0}\longrightarrow 0,$$
where $\partial_1= 1-a$.
For every $\Gamma \in \cG$, one has the  $\BZ$-complex  $\cS_\Gamma = \BZ[\GbP] \ot_{\BZ[\Pi]} \cS$.
We provide $\BZ[\GbP]$ with a metric such that $\GbP$ is an orthogonal lattice.
%Since $\cS_\Gamma$ is a finite CW-complex, there is defined a metric on each $H_j(\cS_\Gamma$), see Section \ref{sec:metricgroup}.
Recall that $b_j(\cS_\Gamma)$ is the rank of $H_j(\cS_\Gamma)$.

\begin{lemma} \lbl{r.circle} Suppose $a$ has infinite order as an element of $\Pi$.

(a) One has
\be \lbl{eq.2016b}
\limsupG \frac{b_0(\cS_\Gamma)}{[\Pi:\Gamma]} = \limsupG \frac{b_1(\cS_\Gamma)}{[\Pi:\Gamma]}=0.
\ee

(b)
%For each $i=0,1$,
The function $\Gamma \to {\det}'(1-a_\Gamma)$ is negligible.
\end{lemma}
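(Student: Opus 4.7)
The plan is to decompose $\BC[\Gamma\backslash\Pi]$ under the right action of the cyclic subgroup $\langle g\rangle$, compute everything orbit by orbit via Lemma \ref{r.cyclic}, and then control the orbit statistics using trace convergence $\Gamma\totr 1$.

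Let $a_l(\Gamma)$ denote the number of $\langle g\rangle$-orbits of size exactly $l$ in $\Gamma\backslash\Pi$. Since orbits partition $\Gamma\backslash\Pi$, we have $\sum_l l\,a_l(\Gamma) = [\Pi:\Gamma]$. On each orbit of size $l$, the restriction of $1-g_\Gamma$ is, up to ordering, the operator $1-g$ on $\BC[\BZ/l\BZ]$ from Lemma \ref{r.cyclic}, whose kernel and cokernel over $\BZ$ are both free of rank $1$ and whose geometric determinant is $l$. Consequently
\[
b_0(\cS_\Gamma)=b_1(\cS_\Gamma)=\sum_l a_l(\Gamma)=:N_\Gamma,
\qquad
{\det}'(1-g_\Gamma)=\prod_l l^{a_l(\Gamma)}.
\]

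The key link to the trace is that the number of fixed points of $g^k$ on $\Gamma\backslash\Pi$ equals $\tr(g^k,\BC[\Gamma\backslash\Pi])=[\Pi:\Gamma]\cdot\tr_{\Gamma\backslash\Pi}(g^k)$, and an element is fixed by $g^k$ iff its orbit size divides $k$. Hence
\[
\sum_{d\mid k} d\,a_d(\Gamma) \;=\; [\Pi:\Gamma]\,\tr_{\Gamma\backslash\Pi}(g^k),
\qquad\text{so}\qquad
l\,a_l(\Gamma)\le [\Pi:\Gamma]\,\tr_{\Gamma\backslash\Pi}(g^l).
\]
Since $g$ has infinite order, $g^l\ne e$ for every $l\ge 1$, so $\tr_\Pi(g^l)=0$; therefore $\Gamma\totr 1$ forces $\tr_{\Gamma\backslash\Pi}(g^l)\to 0$ for every fixed $l\ge 1$.

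For part (a), I split $N_\Gamma$ at a cutoff $M\ge 1$:
\[
\frac{N_\Gamma}{[\Pi:\Gamma]}
\;=\;\sum_{l\le M}\frac{a_l(\Gamma)}{[\Pi:\Gamma]}+\sum_{l>M}\frac{a_l(\Gamma)}{[\Pi:\Gamma]}
\;\le\;\sum_{l\le M}\frac{\tr_{\Gamma\backslash\Pi}(g^l)}{l}+\frac{1}{M},
\]
where the tail bound uses $a_l\le l\,a_l/M$ for $l>M$ and $\sum_l l\,a_l(\Gamma)=[\Pi:\Gamma]$. Taking $\limsupG$ kills the first sum (for each fixed $M$), leaving $1/M$; sending $M\to\infty$ gives part (a). For part (b), the analogous split with weights $\ln l$ gives
\[
\frac{\ln{\det}'(1-g_\Gamma)}{[\Pi:\Gamma]}
\;=\;\sum_l\frac{a_l(\Gamma)\ln l}{[\Pi:\Gamma]}
\;\le\;\sum_{l\le M}\frac{\ln l}{l}\,\tr_{\Gamma\backslash\Pi}(g^l)\;+\;\sup_{l>M}\frac{\ln l}{l},
\]
using $a_l\ln l=(l a_l)\cdot(\ln l/l)$ in the tail. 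The first sum vanishes in the $\limsupG$ for each fixed $M$, and the second tends to $0$ as $M\to\infty$ since $\ln l/l\to 0$. Hence $\limsupG\bigl({\det}'(1-g_\Gamma)\bigr)^{1/[\Pi:\Gamma]}\le 1$, which is negligibility.

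The only subtlety is the tail estimate in part (b): unlike the counting in (a), the weights $\ln l$ are unbounded, so one must use that $\ln l/l\to 0$ to convert the constraint $\sum l\,a_l=[\Pi:\Gamma]$ into control of the large-orbit contribution. Everything else is a bookkeeping consequence of the orbit decomposition and Lemma \ref{r.cyclic}.
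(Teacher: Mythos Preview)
Your proof is correct. Part (a) is essentially identical to the paper's direct argument: same orbit decomposition, same bound $l\,a_l(\Gamma)\le[\Pi:\Gamma]\,\tr_{\Gamma\backslash\Pi}(g^l)$ from fixed-point counting, same cutoff-and-tail estimate.

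For part (b) you take a different route. The paper, having computed $\det'(1-g_\Gamma)=\prod_j l_j$ via Lemma \ref{r.cyclic}, applies the AM--GM inequality to get $\prod_j l_j\le (N/d)^d$ with $N=[\Pi:\Gamma]$ and $d=N_\Gamma$, and then observes that $(N/d)^{d/N}\to 1$ because $d/N\to 0$ by part (a). You instead rerun the cutoff argument from (a) directly on $\sum_l a_l(\Gamma)\ln l$, using that the weights $\ln l/l$ are bounded and tend to $0$. Both arguments are short; the paper's has the advantage of reducing (b) cleanly to (a), while yours avoids the detour through AM--GM and makes the dependence on the orbit statistics more explicit. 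Either way the key input is the same: trace convergence forces each fixed $a_l(\Gamma)/[\Pi:\Gamma]$ to vanish, and the constraint $\sum_l l\,a_l=[\Pi:\Gamma]$ controls the tail.
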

\begin{proof} (a) Since the order of $a$ is infinite, the group $\la a\ra$ is an infinite cyclic subgroup of $\Pi$. By decomposing $\Pi$ as the disjoint union of cosets of $\la a \ra$,  it is easy to see that the $L^2$-Betti numbers of $\cS$ are 0. Alternatively, the $L^2$-Betti numbers of the circle are all 0, and hence the $L^2$-Betti numbers of $\cS$ are 0 by the induction theorem, see \cite[Theorem 1.35(10)]{Luck:book}.
Hence Lemma 4.1(a) is a special case of  the main result of Farber \cite{Farber}.

Actually, (a) is much simpler than the full result of \cite{Farber} due to the simple nature of $\cS$,
and here is a direct proof (supplied by J. Meumertzheim).
Let  $a_\Gamma$ denote the action of $a$ on $\BZ[\GbP]$. The chain complex $\cS_\Gamma$ has the form
$$ 0 \to \BZ[\GbP] \overset{1-a_\Gamma}\longrightarrow \BZ[\GbP] \longrightarrow 0.$$
%Then $H_1(\sC_\Gamma) = \ker(1-a_\Gamma)$ and $H_0(\cS_\Gamma)= \coker(1-a_\Gamma)$.
From the Euler characteristic consideration, one has $b_1(\cS_\Gamma)= b_0(\cS_\Gamma)$.

 The element $a_\Gamma$ acts on $\BC[\GbP]$ by permuting the basis
 $\GbP$. Suppose as a permutation, $a_\Gamma$ has $d_n(a_\Gamma)$ cycles of length $n$, with total
 $d= d(\Gamma):= \sum_n d_n(a_\Gamma)$ cycles. From $H_1(\sC_\Gamma) = \ker(1-a_\Gamma)$ one can easily see that $b_1(\cS_\Gamma)=d$. We have
 \be
 \lbl{eq.2016}
  [\Pi: \Gamma]= \sum_n n d_n(a_\Gamma).
  \ee
 The trace of $a_\Gamma$ is equal to the number of cycles of length 1, i.e. %$d_1(a_\Gamma)$.
 \be
 \lbl{eq.2016d}
 [\Pi:\Gamma] \tr _{\GbP} (a) = \tr( a_\Gamma, \BC[\GbP])= d_1(a_\Gamma).
 \ee
 Using \eqref{eq.2016d} and counting the number of cycles of length 1 of $(a^m)_\Gamma$, one has
 $$ [\Pi:\Gamma] \tr _{\GbP} (a^m)= d_1( (a^m)_\Gamma  )= \sum_{n|m, n \ge 1} n d_n(a_\Gamma).$$
 In particular,
 \be
 \lbl{eq.2016a}
 d_n(a_\Gamma) \le \frac1n  \, [\Pi:\Gamma] \tr _{\GbP} (a^n).
 \ee
For all positive integers $l$ we have
\begin{align*}
\frac{b_1(\cS_\G)}{[\Pi:\Gamma]}= \frac{\sum_n  d_n(a_\Gamma)}{[\Pi:\Gamma]}&= \frac{\sum_{n=1}^l  d_n(a_\Gamma)}{[\Pi:\Gamma]} + \frac{\sum_{n \ge l+1}  d_n(a_\Gamma)}{[\Pi:\Gamma]} \\
&\le \sum_{n=1}^l  \frac{ \tr _{\GbP} (a^n)}{n } + \frac{\sum_{n \ge l+1}  d_n(a_\Gamma)}{\sum_n n d_n(a_\Gamma)} \qquad \text{by \eqref{eq.2016} and \eqref{eq.2016a}}\\
&\le \sum_{n=1}^l  \frac{ \tr _{\GbP} (a^n)}{n } + \frac{1}{l+1}.
\end{align*}
Taking the limit, using the fact that $\displaystyle{\limsupG \frac{ \tr _{\GbP} (a^n)}{ [\Pi:\Gamma]}= \tr_\Gamma(a^n)=0}$ (since $a^n \neq 1$), we have
$$ \limsupG \frac{b_1(\cS_\G)}{[\Pi:\Gamma]} \le \frac{1}{l+1}.$$
Since this is true for all $l$, we have \eqref{eq.2016b},
 which proves (a).

 (b)
 Let $N=N(\Gamma):=[\Pi:\Gamma]$. Recall that $b_1(\cS_\Gamma)=d(\Gamma)$, the number of cycles of $a_\Gamma$.

 \noindent {\em Claim 1.} The function $(N(\G)/d(\G))^{d(\G)}$ on $\cG$ is negligible.\\
 Proof of Claim 1. By  part (a),  $d/N \to 0$ as $\Gamma \trlim 1$.
  It follows that $(N/d)^{(d/N)} \to 1$ as $\Gamma \trlim 1$. This proves  Claim 1.

  % \noindent {\em Claim 2.}
 %  The function $\det'(1-a_\Gamma)$ on $\cG$ is negligible. \\ Proof of Claim 2.
Let  $a_\Gamma$ have $d$ cycles of length $l_1,\dots,l_d$.
  Lemma \ref{r.cyclic} shows that
 $$ \det'(1-a_\Gamma)= \prod_{j=1}^d l_j.$$
 Since $\sum_{j=1}^d l_j=N$, the arithmetic-geometric mean inequality implies
  $$ \det'(1-a_\Gamma)= \prod_{j=1}^d l_j \le  (N/d)^{d},
 $$
  which, in light of Claim 1, proves that $\det'(1-a_\Gamma)$ is negligible.
\no{

  Let us continue with the proof of the lemma.
 Let $W\subset \BC[\GbP]$ be the subspace span by $\IM(1-a_\Gamma)$. Then  one can identify $\Fr(H_0(\cS_\Gamma))= W^\perp \cap \BZ[\GbP] $, and the metric on $\Fr(H_0(\cS_\Gamma)$ is exactly the metric of $\BC[\GbP]$ restricted on $W^\perp$. We have
 $$ \vol(H_0(\cS_\Gamma))= \vol (W^\perp \cap \BZ[\GbP] ) = \vol(W \cap \BZ[\GbP]),$$
 where the second identity follows from \eqref{eq.detortho}. Since $\IM(1-a_\Gamma)$ is a sublattice of $W \cap \BZ[\GbP]$, we have $\vol(W \cap \BZ[\GbP]) \le \vol(\IM(1-a_\Gamma))$. Hence,
 \be
 \lbl{eq.17}
 \vol(H_0(\cS_\Gamma))  \le \vol(\IM(1-a_\Gamma)).
 \ee

  Recall that  $H_1(\cS_\Gamma)= \ker(1-a_\Gamma)$. Using \eqref{eq.17}, we have
  \begin{align*}
   \vol(H_1(\cS_\Gamma)) \vol(H_0(\cS_\Gamma)) &\le \vol(\ker(1-a_\Gamma)) \, \vol(\IM(1-a_\Gamma))
  \\
  &= \det'(1-a_\Gamma) \quad \text{by \eqref{eq.detvol}}.
   \end{align*}
   Since each of $ \vol(H_1(\cS_\Gamma))$ and $\vol(H_0(\cS_\Gamma))$ is $\ge 1$, the above shows that
$$
1\le \vol(H_1(\cS_\Gamma)) \le (N/d)^d, \quad 1\le \vol(H_0(\cS_\Gamma)) \le (N/d)^d.
$$
From Claim 2 we can conclude that both $\vol(H_0(\cS_\Gamma)) $ and $\vol(H_1(\cS_\Gamma)) $ are negligible.
}
\end{proof}

\def\ta{\tilde a}

\begin{remark} Concerning the assumption on the order of $a$, we have the following.
Suppose $X$ is an irreducible 3-manifold whose fundamental group $\Pi$ is infinite. Then $\Pi$ does not have
any non-trivial torsion element (see e.g. \cite[(C3) Section 3.2]{AFW}), i.e. if $a\in \Pi$ is not the unit,
then the order of $a$ is infinite.
 \end{remark}

\subsection{Proof of Theorem \ref{thm.2} for the case $\partial X\neq \emptyset$} \lbl{sec:none}
\begin{proof}
Suppose $X$ satisfies the assumption of Theorem \ref{thm.2} and $\partial X\neq \emptyset$.
 As in Section \ref{sec:goodp}, choose a finite 2-dimensional CW-complex $Y$ homotopic to $X$ which gives a good presentation for $\Pi$:
$$ \Pi =\la a_1,\dots, a_n, a_{n+1} \mid r_1, \dots, r_n \ra.$$

 Let $\sC= \sC(\tY)$ be the chain $\BZ$-complex of the CW-structure of the
 universal covering $\tY$ of $Y$. As in  Section \ref{sec:chainC},
$\sC$ has the form
$$
\sC= \left( 0\to \BZ[\Pi]^n \overset{\partial_2}{\longrightarrow} \BZ[\Pi]^{n+1}  \overset{\partial_1}{\longrightarrow}  \BZ[\Pi] \to 0\right),
$$
where $\partial_2 \in \Mat(n \times (n+1), \Bpi)$ is the $n\times n+1$ matrix
 with entries $(\partial_2)_{ij} = \frac{\partial r_i}{ \partial a_j}$ and
 $\partial_1 \in \Mat(n+1 \times 1, \Bpi ) $  is a column vector with entries  $(\partial_1)_j =1-a_j$.
The reduced Jacobian  $J$, defined in Section \ref{sec:goodp}, is  the matrix obtained from $\partial_2$ be removing the last column, which will be denoted  by $c$.

Then $\sC=\sC(\tY)$ is the middle row of  the following commutative diagram

$$\begin{CD}     0     @ >>>          0   @ >  >>    \Bpi @ > 1-a_{n+1} >>    \Bpi @ > \partial_0>> 0 \\
  @ VVV @VV V  @VV \iota V @VV\id V @VVV\\
           0   @>>>      \Bpi^{n}  @  > \partial_2 >> \Bpi^{n+1} @ > \partial_1 >>    \Bpi   @ > \partial_0>> 0 \\
            @ VVV @VV \id V  @VV   p              V @VV  V @VVV\\
           0 @ >>>   \Bpi^n         @ > J >>     \Bpi^n  @ >>> 0 @ >\partial_0 > > 0
\end{CD}
$$
where $\iota: \Bpi\to \Bpi^{n+1}$ is the embedding into the last component, and $p: \Bpi^{n+1}\to \Bpi^n$ is the projection onto the first $n$ components.
Each row is a chain $\Bpi$-complex.
Denote the chain complex of the first row  and the third row by respectively $\cK$ and $\cQ$.  The operators $\partial_0$ on the diagram indicate how to index the components of the complexes. For example, $\cQ_1=\cQ_2=\Bpi^n$.

The sequence
$$ 0\to\cK \to \sC \to \cQ \to 0$$
is split exact in each degree. Hence, for every $\Gamma\in \cG$, one has the following exact sequence of $\BZ$-complexes
\be \lbl{eq.ses1}
 0\to  \cK_\Gamma \to \sC_\Gamma \to \cQ_\Gamma \to 0.
\ee
Note that $\sC_\Gamma$ is the chain $\BZ$-complex of $Y_\Gamma$, and its homology groups are
the homology groups $H_*(Y_\Gamma,\BZ) = H_*(X_\Gamma,\BZ)$. We provide $\BZ[\GbP]$ with the metric in which $\GbP$
is an orthonormal basis. It is clear that this metric is integral, see Section \ref{sec:metricgroup}.

The short exact sequence \eqref{eq.ses1} generates a long exact sequence, part of it is
$$ H_2(\cQ_\Gamma) \overset {\beta_\Gamma} \longrightarrow  H_1(\cK_\Gamma) \to H_1(\sC_\Gamma) \to H_1(\cQ_\Gamma).$$
Note that $H_2(\cQ_\Gamma)  \le (\cQ_\Gamma)_2 = \BZ[\GbP]^n$ inherits an integral
 metric from $\BZ[\GbP]^n$. Similarly, $H_1(\cK_\Gamma) \le (\cK_\Gamma)_1 = \BZ[\GbP]$ inherits a integral metric.
Applying Lemma \ref{r.est}(b) to the above exact sequence, we get
 \be \lbl{eq.55}
 t(H_1(\sC_\Gamma)) \le \, {\det}'(\beta_\Gamma) \,  t (H_1(\cQ_\Gamma)) \, \vol(H_2(\cQ_\Gamma)).
   \ee
 Since   $H_1(\cQ_\Gamma) = \coker(J_\Gamma)$ and $H_2(\cQ_\Gamma) = \ker (J_\Gamma)$, by Lemma \ref{r.est}(a), we have
 $$  t (H_1(\cQ_\Gamma)) \, \vol(H_2(\cQ_\Gamma))  =  t (\coker(J_\Gamma)) \, \vol(\ker (J_\Gamma))  \le \det'(J_\Gamma).$$
 Using the above inequality in \eqref{eq.55}, and $t_1(\Gamma)=t(H_1(\sC_\Gamma))$, we get
\be\lbl{eq.56}
t_1(\Gamma) \le {\det}'(\beta_\Gamma) \, {\det}'(J_\Gamma).
\ee

{\it Claim.} $\det'(\beta_\Gamma)$ is a negligible function on $\cG$.
\begin{proof}
[Proof of Claim] We estimate $\det'(\beta_\Gamma)$  by using upper bounds for the rank and the norm of $\beta_\Gamma$. First, the rank of $\beta_\Gamma$ is less than or equal to rank of its codomain, which is $b_1(\cK_\Gamma)$.

The connecting homomorphism $\beta$ is the restriction of $\tilde \beta: \Bpi^n \to \Bpi$
given by $\tilde \beta(x) = x \cdot c$, where $c$ is the last column of $\partial_2$.
 It follows that $\|\beta_\Gamma \| \le \| \tilde \beta_\Gamma\| < \nu$ for some
 constant $\nu$ not depending on $\Gamma$ (see Lemma \ref{r.bound}). By \eqref{eq.norm},
$$ {\det}'(\beta_\Gamma) \le \nu^{b_1(\cK_\Gamma)}.$$
By Lemma \ref{r.circle}, $\lim b_1(\cK_\Gamma)/[\Pi:\Gamma]=0$. It follows that $\det'(\beta_\Gamma)$ is negligible. This completes the proof of the claim.
\end{proof}

 From \eqref{eq.56} and the above claim, we have
$$ t_1(\Gamma) \tle {\det}' J_\Gamma.$$
Hence
$$ \limsupG \frac{\ln t_1(\Gamma)}{ [\Pi:\Gamma]}  \le \limsupG \frac{\ln {\det}' (J_\Gamma)}{ [\Pi:\Gamma]} \le \ln  {\det}_\Pi J = \frac{\vol(X)}{6 \pi}, $$
where we use Theorem \ref{r.detbound} in the second inequality and Theorem \ref{thm.3} in the last equality.
\end{proof}
\def\cD{\mathcal D}
\def\cE{\mathcal E}

\subsection{Proof of Theorem \ref{thm.2a}} Let $X= S^3\setminus N(K)$, where $N(K)$ is a small  tubular open neighborhood of $K$. Then $\partial X$ is a torus. We will use
the notations of Section \ref{sec:none}.
 We can further assume that $a_{n+1}$ represents a meridian of $K$.

Suppose $\Gamma \in \cG$. Let $p:X_\Gamma \to X$ be the covering map. Then $p^{-1}(\partial X)$ consists of tori, and $p^{-1}(a_{n+1})$ is a collection of simple closed curves $C_1, \dots C_l$ on
$p^{-1}(\partial X)$. From the definition,  $H_1(\hat X_\Gamma,\BZ)= H_1(X_\Gamma,\BZ)/U$, where $U$ is the subgroup of $H_1(X_\Gamma,\BZ)$ generated by $C_1, \dots,C_l$.
The curves $C_1,\dots,C_l$ are made up from all the lifts of $a_{n+1}$.

The exact sequence \eqref{eq.ses1} gives rise to a long exact sequence
\be
\lbl{eq.ses2}
 \dots H_1(\cK_\Gamma) \overset{\gamma}{\longrightarrow} H_1(\sC_\Gamma) \to H_1(\cQ_\Gamma) \to H_0(\cK_\Gamma)\dots
 \ee
Recall that we use the identification $\sC_1 \equiv \Bpi^{n+1}$ via $\sC_1=\bigoplus_{j=1}^{n+1} \Bpi \cdot \tilde a_j$. Correspondingly, the identification
$\cK_1 \equiv \Bpi$ is via  $\cK_1 = \Bpi \cdot \tilde a_{n+1}$.
Under these identifications, one has $U\equiv \im (\gamma)$. Hence, from the exact sequence \eqref{eq.ses2}, one has the following exact sequence
$$ 0\to H_1(\hat X_\Gamma,\BZ)  \to H_1(\cQ_\Gamma) \to H_0(\cK_\Gamma)\dots$$
Since $H_0(\cK_\Gamma)$ is a free abelian group, the above exact sequence implies that
$$ \tor_\BZ (H_1(\hat X_\Gamma,\BZ))=  \tor_\BZ( H_1(\cQ_\Gamma)).$$
Applying \eqref{eq.detvol} to the map $J_\Gamma: \BZ[\GbP]^n \to \BZ[\GbP]^n$, we get
$$ |\tor_\BZ (H_1(\hat X_\Gamma,\BZ))|=  |\tor_\BZ( H_1(\cQ_\Gamma))| =   \frac{\det'(J_\Gamma)}{\vol(\ker(J_\Gamma)) \vol(\overline{\im(J_\Gamma))}} \le \det'(J_\Gamma).$$
Here $\overline{\im(J_\Gamma))}:= (\im(J_\Gamma) \ot_\BZ \BQ) \cap  \BZ[\GbP]^n$.

Theorem \ref{r.detbound} and Theorem \ref{thm.3} show
$$ \limsup_{\cG \ni \Gamma \totr 1} \frac{\ln |\tor_\BZ (H_1(\hat X_\Gamma,\BZ))|}{[\Pi:\Gamma]} \le
\limsup_{\cG \ni \Gamma \totr 1} \frac{\ln \det'(J_\Gamma)}{[\Pi:\Gamma]} \le \ln \det_{\Pi}(J)= \frac{\vol(X)}{6 \pi}.$$
This completes the proof of Theorem \ref{thm.2a}.

  \def\ind{\mathrm{ind}}

 \def\G{{\Gamma}}
 \def\ptD{ \partial_{2,\G}}
\def\cA{{\mathcal A}}
\def\cB{{\mathcal B}}
\def\cAG{ \tilde \cA}
\def\cBG{  \tilde \cB}
\def\tb{\tilde b}
 \def\tH{\tilde H}
 \def\tF{\tilde F}
  \def\cC{\mathcal C}

 %{\bf rank of $H_1(J_\Gamma)$}
  \subsection{Heegaard splitting and homology} \lbl{sec.Hee1}
Assume that $X$ is an oriented connected closed 3-manifold and $X= H \cup H'$ is a Heegaard splitting of $X$. This means
each of $H$ and $H'$ is a   handlebody  of genus $g$, and $H \cap H'=\partial H =\partial H'$. For generalities on Heegaard splittings  the reader can consult \cite{Hempelbook}.

A properly embedded
disk $D \subset H$ is {\em essential} if its boundary does not bound a disk in $F=\partial H$. 
A {\em system of disks} of a handlebody $H$ of genus $g$ is a collection $\cD=\{D_1,\dots, D_n\}$ of properly embedded, essential oriented disks in $H$
% is {\em exhaustive} if the disks 
 which are disjoint and cut $H$ into balls.
 Then $n \ge g$, and  if $n=g$, we say that the system of disks $\cD$ is {\em minimal}. A disk system $\cD$ is minimal if and only if  $H \setminus (\bigcup_{D \in \cD} D)$ is connected, and any disk system has a subset which is a minimal disk system.

Suppose $\cD=\{D_1,\dots,D_n\}$ is a disk system
 of $H$ and $\cD'=\{ D'_1, \dots, D'_m\}$ is a disk system of 
$H'$. Let $\al_i=\partial D_i$ and $\al_j'=\partial D'_j$.  The common boundary
 $F=\partial H=\partial H'$ inherits an orientation from $H$, and each curve $\al_i,\al'_j$ inherits an  orientation from $D_i, D'_j$. We will assume that $\al_i$ and $\al'_j$ are transversal  in $F$ for all pairs $(i,j)$, and let $\mu(\al_i,\al'_j)$ be the intersection index of $\al_i$ and $\al'_j$.

The disk  system $\cD$ generates a {\em dual graph} $G$ embedded in $H$: to every connected component $Q$ of $H \setminus (\cup_{i=1}^n D_i)$ there corresponds a vertex $v_Q$ which is a point in the interior of $Q$, and to every disk $D_i$ there corresponds an edge $a_i\subset H$. If $D_i$ is in the closure of two connected components $Q$ and $Q'$, then $a_i$ is an edge connecting $v_Q$ and $v_{Q'}$. If $D_i$ is in the closure of only one connected component $Q$, then $a_i$ is a loop edge based at $v_Q$. In all cases, $a_i$ intersects $D_i$ transversely at exactly one point, and $a_i \cap D_j =\emptyset$ for $i\neq j$.  We orient $a_i$ so that the intersection index $\mu(a_i, D_i)=1$.  Note that $G$, known as a spine of $H$, is a deformation retract of $H$. Let $G'\subset H'$ be a dual graph of $\cD'$, with edge $a'_i$ dual to disk $D'_i$.

 Although $a_i$ and $a'_j$ do not intersect, we will define
 \be
 \mu(a'_i,a_j)= \mu(\al'_i, \al_j).
 \ee

  \begin{proposition} \lbl{r.tor1} Let $M\in \Mat(m\times n, \BZ)$ be given by $M_{ij}= \mu(a_i',a_j)$.  Then $t(H_1(X,\BZ)) = t(M)$.\end{proposition}
  \begin{proof}

 Without loss of generality we can assume that $\bar \cD=\{D_1,\dots, D_g\}$ is a minimal disk system of $H$. Let $\bar M$ be the $m\times g$ submatrix of $M$ consisting of the first $g$ columns.
%First we assume that $D$ is minimal. 
 The dual graph $\bar G$ of $\bar \cD$ has only one vertex and  $g$ loop edges $\bar a_1, \dots, \bar a_g$ (dual to $D_1,\dots, D_g$). The group $H_1(H,\BZ)$ is free abelian with basis $\{\bar a_1, \dots, \bar a_g\}$. If $c\subset F=\partial H$ is an oriented closed curve, then the homology class of $c$ in $H_1(H,\BZ)$ is $\sum_{i=1}^g \mu(c, \al_i) \bar a_i$. %A regular neighborhood of $D'_j$ in $H'$ is a 2-handle.
  To obtain $X$ one glues $H'$ to $H$, and this can be done in 2 steps. In the first one glues the disks $D'_j$ to $H$, then in the second one glues in the complements (in $H'$) of $D'_i$ which are 3-balls. The second step does not change the  homology group $H_1$. The first gluing shows that 
 % $H_1(X,\BZ)$ is the quotient of the free abelian group generated by $\{\bar 1,\dots, \bar_g\}$ by  the the subgroup
  $H_1(X,\BZ)= \coker  \CR_{\bar M}$, where $\CR_{\bar M} (x)= x\bar M$.

Cutting $F=\partial H$ along  $\al_1,\dots,\al_g$, we get a sphere with $2g$ disks removed. More precisely,  $F$ is obtained from $F'$, a compact surface of genus 0 and $2g$ boundary components $c'_1,c''_1, c'_2,c''_2,\dots, c'_g, c''_g$, by gluing $c'_i$ to $c''_i$, with  the common image being $\al_i$.
Any simple closed curve in the interior of $F'$ is separating (because $F'$ has genus $0$), and hence is homologically equal to a $\BZ$-linear combination of boundary curves  with coefficients $\pm1$.
It follows that the homology class of any curve in $F$ not meeting any of $\al_1,\dots,\al_g$ is in the $\BZ$-linear span of $\al_1,\dots, \al_g$; this applies to the curves $\al_i$ with $i>g$.
This implies any column of $M$ is a $\BZ$-linear combination of the first $g$ columns. In other words, the images of $\CR_{M^*}$ and $\CR_{\bar M^*}$ are same. Hence 
$ t(M^*)= t(\bar M^*)$. By Proposition \ref{r.tor5}, one has $t(M)= t(M^*)$. It follows that  $t(H_1(X,\BZ)) = t(\bar M)=t(M)$.
  \end{proof}
  
 Here is another proof of Proposition \ref{r.tor1}. A Heegaard splitting $X= H\cup H'$ and disk systems $\cD$ of $H$ and $\cD'$ of $H'$ give rise to a CW-complex structure of $X$ as follows. First, the handlebody $H$ is obtained from zero-handles, each is a regular neighborhood of a vertex of the graph $G$ dual to $\cD$, by attaching one-handles whose cores are in the edges $a_i$ of $G$. Then to $H$ one glues two-handles whose cores are $D'\in \cD$. Finally by gluing in three-handles one gets  $X$.  This handle decomposition gives rise to a CW-complex structure of $X$, see \cite[Chapter 6]{RS}. The second boundary map of the associated chain complex is given by the matrix $M$.
 By Proposition~\ref{r.tor5}(c), we get $t(H_1(X,\BZ)) = t(M)$.

 Recall that a cycle of a graph is a closed walk in the graph which does not visit any vertex twice. We show here a way to simplify a disk system of a handlebody.

\begin{lemma} \lbl{r.exhaust}
 Suppose $c_1, \dots, c_r$ are disjoint cycles of $G$. In each cycle choose an edge, called the preferred edge of the cycle. Then the   set $\cE\subset \cD$, consisting of all $D_i$ such that either $a_i$ is a preferred edge of a cycle or $a_i$ does not belong to any cycle,  is a system of disks of $H$.
\end{lemma}

\begin{proof}  A cycle of $G$ either (i)  is  one of $c_1, \dots, c_r$, or
(ii) contains an edge which is not in any of $c_1, \dots, c_r$. This shows $\cE$ cuts $G$ into trees, implying each connected component of $H \setminus (\cup _{D \in \cE}D)$  is contractible.  Hence $\cE$ is a disk system  of $H$.
\end{proof}

\def\tf{\tilde f}
\def\tG{\tilde G}
 \def\AG{{\mathcal A}_\Gamma}
  \def\BG{\tilde{\mathcal B}}
    \def\tA{\tilde {\cA}}
    \def\tB{\tilde{\cB}}
    \def\cU{\mathcal U}
    \def\tU{\tilde{\cU}}
    \def\tD{\tilde {\cD}}
   \def\tJ{\tilde J}
   \def\tS{\tilde S}
 \newcommand{\red}[1]{{\color{red}#1}}

  \subsection{Proof of Theorem \ref{thm.2} for the case when $\partial X=\emptyset$}
  Suppose $X$ is an irreducible, connected, closed, oriented 3-manifolds. If $\Pi=\pi_1(X)$ is finite, then the statement of Theorem \ref{thm.2} is trivial. We will assume that $\Pi$ is infinite. Then every non-trivial element of $\Pi$ has infinite order.

Suppose $X= H \cup H'$ is a Heegaard splitting of $X$, where $F=\partial H=\partial H'$ has genus $g$.  Suppose $\cD=\{D_1,\dots,D_g\}$ is a minimal disk system
of $H$ and $\cD'=\{ D'_1, \dots, D'_g\}$ is a minimal disk system of 
$H'$. Let the dual graph $ G$ (resp. $G'$) with the set of edges $\cA=\{a_1, \dots, a_g\}$ (resp. $\cA'=\{a'_1, \dots, a'_g\}$) be as defined in Subsection \ref{sec.Hee1}. The minimality of $\cD$ and $\cD'$ implies each of  $G$ and $G'$ has  one vertex, and each of $a_i$, $a_i'$ is a loop-edge. Each of the sets $\{a_1,\dots, a_g\}$ and $\{a'_1,\dots, a'_g\}$ generates $\Pi=\pi_1(X)$. Hence by reordering, one can assume that $a_g$ and $a'_g$ are non-trivial in $\Pi$. Thus, the associated presentation of $\Pi$,
   \be
  \lbl{eq.presen2}
   \Pi= \la a_1,\dots, a_{g} \mid r_1,\dots, r_{g}\ra,
   \ee
 where $r_i$ is determined by the two-cell  $D'_i$, is good.

 {Let $\tJ \in \Mat(g \times g, \BZ[\Pi])$ be the matrix whose $ij$-entry is given by
  $ \tJ_{ij} = \frac{\partial r_i}{\partial a_j}.$ 
  Then $\tJ$ is the second boundary operator of the chain complex of the universal covering of $X$, see Subsection \ref{sec:chainC}.  The reduced Jacobian $J$ is obtained from $\tJ$ by removing the last row and the last column.

 Let $(\G_m)_{m\ge 0}$ be a sequence of subgroups of $\Pi$ of finite index such that $\G_m \totr 1$.
  Fix an index $m$ for now.
 Let $P_m: X_m \to X$ be the covering map corresponding to the subgroup $\G_m$ and $\tH, \tH', \tG, \tG'$ be respectively the preimage of $H,H',G,G'$ under $P_m$.

For a disk $D$ in $\cD$ or $\cD'$, the preimage $(P_m)^{-1}(D)$ is a collection of disjoint disks, each is called a {\em lift} of $D$.
 The collection $\tD$ (resp. $\tD'$) of all lifts of all disks in $\cD$ (resp. $\cD'$) is a disk system of $\tH$ (resp. $\tH'$). 
  %If $f: X\to \BR$ is a self-indexed Morse-smale function  determining the Heegaard splitting $X= H\cup H'$
  The CW-structure of $X_m$, corresponding to the Heegaard splitting $X_m = \tH \cup \tH'$ and the disk systems $\tD$ and $\tD'$, is exactly the lift of the CW-structure of $X$.
Hence the boundary operator $\partial_2: \cC_2(X_m) \to \cC_1(X_m)$ is equal to $ \tJ_m:=\tJ_{\G_m}$.

%Let $\cA=\{ a_1,\dots, a_g\}$, the set of all edges of $G$, and  $\cA'=\{ a'_1,\dots, a'_g\}$.
Let $\tA$ (resp. $\tA'$) be the set of edges of $\tG$ (resp. $\tG'$). An edge $\tilde a$ of $\tilde G$ (resp. $\tilde G'$) is called a {\em lift} of an edge $a$ of $G$ (resp. $G')$ if $P_m(\tilde a)= a$.
%Note that $\tA$ (resp. $\tA'$) is the set of all lifts of all elements of $\cA$ (resp. $\cA'$). 
As explained in Subsection \ref{sec.Hee1}, if we identify $\cC_2(X_m)$ with the free $\BZ$-module with basis $\tA'$ and identify $\cC_1(X_m)$ with the free $\BZ$-module with basis $\tA$, then  $\tJ_m=\partial_2: \cC_2(X_m) \to \cC_1(X_m)$ is  given by the $\tA'\times \tA$ matrix  whose $(a',a)$-entry is  $\mu(a', a)$.

Suppose all the lifts of $a_{g}$ form $l_m$ cycles. % in the graph $\tG_m:= P_m^{-1}(G)$.
In each cycle choose one lift of $a_{g}$, called a {\em preferred lift}.
 Similarly, all the lifts of $a'_{g}$ form $l'_m$ cycles. In each cycle choose one lift of $a'_{g}$, called a {\em preferred lift} of $a'_{g}$.
Let $\BG$ (resp. $\BG'$)  be obtained from $\tA$ (resp. $\tA'$) by removing all the non-preferred lifts of $a_g$ (resp. $a'_g$).
Let $J'_m$ be the $\BG' \times \BG$-submatrix of $\tJ_m$.
By Lemma \ref{r.exhaust} and Proposition \ref{r.tor1},  one has $ t(H_1(X_m))= t(J'_m)$, hence by Proposition \ref{r.tor5}},
$$ t(H_1(X_m))= t(J'_m) \le {\det}' (J'_m).$$

 By Theorem \ref{r.detbound}, the sequence $(J_m, [\Pi:\G_m])_{m \ge 1}$ tracely approximates $J$, where
 $J_m:= J_{\G_m}$. Note that $J_m$  is obtained from $J'_m$ by removing $l_m$ rows corresponding to the $l_m$ preferred lifts of $a_{g}$ and $l'_m$ columns corresponding to the $l'_m$ preferred lifts of $a'_{g}$. Let $d_m= \max(l_m, l'_m)$. By (the proof of) Lemma~\ref{r.circle},  $\lim_{m\to \infty}\frac{d_m}{[\Pi:\G_m]}=0$. Besides, by Lemma \ref{r.bound}, there is $L>0$ such that $\|\tJ_{\G_m}\| <L$. As a submatrix of $\tJ_{\G_m}$, we also have $\|J_m'\| < \|\tJ_{\G_m}\|<L$. Now Proposition \ref{r.detbound3} (see Remark \ref{rem.upp}) shows the second inequality in the following

 $$ \limsup_{m\to \infty} \frac{\ln t(H_1(X_m))}{ [\Pi:\Gamma_m]}
  \le \limsup_{m\to \infty}  \frac{\ln {{\det}}' (J'_m)}{ [\Pi:\Gamma_m]} \le \ln {{\det}}_\Pi J = \frac{\vol(X)}{6 \pi}, $$
which proves Theorem \ref{thm.2}.

 \begin{remark}
 One can have a proof of Theorem \ref{thm.2} for the case when $\partial X \neq \emptyset$ similar to (and actually simpler than)  the above proof for the case when   $\partial X= \emptyset$.
 \end{remark}
 \appendix

\section{Proof of Theorem \ref{r.detbound1}}  \lbl{sec.a1}
 \def\uF{\underline G}
 \def\oF{\overline G}
 \def\nc{\mathrm{nc}}
  Assume $(B_m,N_m)_{m\ge 1}$ tracely approximates $B$, and $K$ is the constant in Definition \ref{def.appro}.
  Let $F$ be the spectral density function of $B$, and $F_m$ be the spectral density function of $B_m$.  Define
 \be 
 \lbl{eq.def5}
  G_m (\lambda)= \frac{F_m(\lambda)}{N_m}, \quad \uF(\lambda) = \liminf_{m\to \infty} G_m (\lambda), \quad \oF(\lambda) = \limsup_{m\to \infty} G_m (\lambda).
  \ee
 Recall that for any  increasing function $h:\BR \to \BR$
 one defines $   h^+(\lambda)= \lim_{\ve \to 0^+} h(\lambda + \ve).$

\begin{lemma}  \lbl{r.tech}
One has $F= \uF^+=\oF^+$. \end{lemma}

\begin{proof} The proof is a slight modification of that of \cite[Theorem 2.3]{Luck:approximation}.
%It is advised that the reader has a copy of   \cite{Luck:approximation} handy for reference.

Since $\|B\| ,  \|B_m B^*_m \| < K^2  $, we have $F_m(\lambda)= \nr(B_m)$ and $F(\lambda)= \nr(B)$ for all  $\lambda\in[{K^2},\infty)$. Applying Condition (ii) of Definition~\ref{def.appro} to the constant polynomial $p=1$ and $\lambda =K^2$, one has

%$\nr(B_m) \le K N_m$.  It follows that
\be
 \lbl{eq.17a}
 \lim_{m\to \infty} \frac{\nr(B_m)}{N_m}= \nr(B).
% F_m({K^2}) \le KN_m
% G_m(K)=\frac{F_m(K)}{N_m}= \frac{\nr(B_m)}{N_m}  \le K.
\ee

Fix a number $\lambda\in [0,{K^2})$ and let $(p_n)_{n\ge 1}$ be the sequence of polynomials constructed in \cite[Section 2]{Luck:approximation}. These polynomials approximate the  characteristic function of the interval $[0,\lambda]$ and have the following properties:

\be \lbl{eq.appr1}
0\le p_n  \ \text{on} \  [0,{K^2}], \quad  1\le p_n \le 1+ \frac 1n \ \text{on} \  [0,\lambda],
 \ee \be
 \lbl{eq.appr2}  p_n \le 1+ \frac 1n \ \text{on} \  [\lambda, \lambda + \frac 1n],
% \ee \be
 \quad    p_n(\mu)  \le \frac 1n \ \text{on } \ [\lambda + \frac 1n, {K^2}].
\ee \be
\lbl{eq.appr4}
\lim_{n\to \infty} \tr_\Pi(p_n(BB^*)) = F(\lambda).
\ee

 Using the bounds of $p_n$ given by \eqref{eq.appr1}, we have
 $$\tr(p_n(B^*_m B_m))  =  \int_0^{K^2}  p_n(\mu) dF_m(\mu)  \ge  \int_0^\lambda  p_n(\mu) dF_m(\mu)
 \ge  \int_0^\lambda   dF_m(\mu) = F_m(\lambda).
 $$
  Dividing by $N_m$,
 $$ G_m(\lambda)=\frac{F_m(\lambda)}{N_m}  \le \frac{\tr(p_n(B^*_m B_m))}{N_m}.
 $$
Take the superior limit as $m\to\infty$, then the limit as $n\to \infty$. Using Condition (ii) of Definition~\ref{def.appro} and \eqref{eq.appr4}, we have
 \be \lbl{eq.o}
  \oF(\lambda) \le F(\lambda).
 \ee

 Use the bounds of $p_n$  given by \eqref{eq.appr2},  we have
   \begin{align*}
\tr(p_n(B^*_m B_m)) & =  \int_0^{K^2}  p_n(\mu) dF_m(\mu)  =  \int_0^{\lambda + 1/n}  p_n(\mu) dF_m(\mu)  + \int_{(\lambda + 1/n)^+}^{K^2}  p_n(\mu) dF_m(\mu)
 \\
&\le \left( 1 + \frac 1n \right) F_m\left(\lambda + \frac 1n\right) + \frac 1 n \left (F_m({K^2}) - F_m\left(\lambda + \frac 1n\right)\right)\\
&=   F_m\left(\lambda + \frac 1n\right) +  \frac 1n F_m({K^2}) =  F_m\left(\lambda + \frac 1n\right) +   \frac {\nr(B_m)}n
% \\ &\le   F_m\left(\lambda + \frac 1n\right) +   \frac {K N_m}n,  \quad \text{by \eqref{eq.17a}  }.
 \end{align*}
Dividing by $N_m$, then taking the inferior limit as $m\to \infty$ and using \eqref{eq.17a}, we have
$$ \tr(p_n(B^* B) \le \uF(\lambda + \frac 1n) + \frac{\nr(B)}{n}.$$
Taking  the limit as $n \to \infty$, we have
\be
\lbl{eq.l}
F(\lambda) \le \uF^+(\lambda).
\ee
Using \eqref{eq.l}, \eqref{eq.o} and the fact that $\uF$ is increasing, for all $\ve >0$ we have
 $$
F(\lambda) \le \uF^+(\lambda) \le \uF(\lambda + \ve) \le \oF(\lambda + \ve) \le F(\lambda + \ve).
$$
Taking the limit $\ve \to 0^+$ and noting that $F$ is right continuous, i.e. $\lim_{\ve \to 0+} F(\lambda + \ve ) = F(\lambda)$, we have $F(\lambda) = \uF^+(\lambda)= \oF^+(\lambda)$.
\end{proof}
\begin{lemma}\lbl{r.a2}
 One has
 \begin{align}
 \lbl{eq.9i}
 \uF^+(0)%= F(0)
 &= \lim_{m \to \infty} G_m(0),\\
 \lbl{eq.9j}  \uF^+({K^2})& = \lim_{m \to \infty} G_m({K^2}). \end{align}
\end{lemma}
\begin{proof} We have \eqref{eq.9i}, since
  the left hand side is equal to $F(0)$ by Lemma \ref{r.tech}, while the right hand side is also equal to $F(0)$ by Condition  (iii) of Definition \ref{def.appro}.

  Since each $G_m$, like $F_m$, is a constant function on $[{K^2},\infty)$, each of $\uF$ and $\oF$ is constant on $[{K^2},\infty)$. It follows that $\uF= \uF^+$ and $\oF= \oF^+$ on $[{K^2},\infty)$. Hence for $\lambda \in [{K^2},\infty)$,
  $$ \uF(\lambda)= \uF^+(\lambda) = F(\lambda) =  \oF^+(\lambda)= \oF(\lambda).$$
  This implies $\lim_{m\to \infty} G_m(\lambda)= \uF(\lambda)= \uF^+(\lambda)$ for $\lambda \in [{K^2},\infty)$. In particular,
  we have \eqref{eq.9j}.\end{proof}

  Let us prove Theorem \ref{r.detbound1}.
By definition and Lemma \ref{r.tech},  one has $({\det}_\Pi(B))^2=\det(F)= \det(\uF^+)$.  Using
 \eqref{eq.detpart} then Lemma \ref{r.a2}, we have
\begin{align}
 2 \ln ({\det}_\Pi(B)) = \ln \det(\uF^+)&= (\uF^+({K^2})-\uF^+(0)) \ln({K^2}) -  \int_{0^+}^{K^2} \frac{\uF^+(\lambda)- \uF^+(0)}{\lambda} d \lambda  \nonumber
%\\ &= (\uF({K^2})-\uF(0)) \ln({K^2}) -  \int_{0^+}^{K^2} \frac{\uF^+(\lambda)- \uF^+(0)}{\lambda} d \lambda \nonumber
\\ &= \lim_{m\to \infty} (G_m({K^2}) - G_m(0)) \ln {K^2} - \int_{0^+}^{K^2} \frac{\uF^+(\lambda)- \uF^+(0)}{\lambda} d \lambda
\lbl{eq.011}
\end{align}

By  \cite[Lemma 3.2]{Luck:approximation},  we the first of the following identities
\begin{align}   \int_{0^+}^{K^2} \frac{\uF^+(\lambda)- \uF^+(0)}{\lambda} d \lambda &=
  \int_{0^+}^{K^2} \frac{\uF(\lambda)- \uF^+(0)}{\lambda} d \lambda
  \nonumber
   \\
   &= \int_{0^+}^{K^2} \liminf_{m \to \infty }\frac{G_m(\lambda)- G_m(0)}{\lambda} d \lambda \notag\\
 &\le \liminf_{m \to \infty }  \int_{0^+}^{K^2} \frac{G_m(\lambda)- G_m(0)}{\lambda} d \lambda, \lbl{eq.ine4}
 \end{align}
where the second identity follows from $\uF(\lambda)= \liminf_{m\to \infty} G_m(\lambda) $ and \eqref{eq.9i}, and the
  last inequality follows  from Fatou's lemma.
 %from  \eqref{eq.mono}.
Using Inequality \eqref{eq.ine4} in \eqref{eq.011}, we have
  \begin{align*}
  2 \ln ({\det}_\Pi(B)) & \ge \lim_{m\to \infty} (G_m({K^2}) - G_m(0)) \ln {K^2} - \liminf_{m\to \infty} \int_{0^+}^{K^2} \frac{G_m(\lambda)- G_m(0)}{\lambda} d \lambda \\
 &= \limsup_{m\to \infty}\left\{  (G_m({K^2}) - G_m(0)) \ln {K^2} - \int_{0^+}^{K^2} \frac{G_m(\lambda)- G_m(0)}{\lambda} d \lambda \right \}\\
 & =  \limsup_{m\to \infty} \{ 2 \ln ({\det}' (B_m))/N_m\},
  \end{align*}
where in the last equality we again use \eqref{eq.detpart}. This completes the proof of Theorem \ref{r.detbound1}.

\begin{remark} As observed by the referee, Theorem \ref{r.detbound1} follows from the portmanteau theorem in probability theory as follows (sketch). Let $G_m,F$ be defined as in \eqref{eq.def5}. Then $P_m:=dG_m$ and $P:= dF$ are finite Borel measures  on $\BR$, with support in $[0,K^2]$. Condition (ii) of Definition \ref{def.appro} implies that for any polynomial $f$,
\be 
\lbl{eq.0f} \lim_{m\to \infty} \int_{[0,K^2]} f d  P_m = \int_{[0,K^2]} f d P
\ee
Every continuous function on $[0,K^2]$ can be uniformly approximated by polynomials. It follows that \eqref{eq.0f} holds for all bounded continuous functions. Hence, by  definition (see \cite{Ambrosio}), the sequence of Borel measures $P_m$ {\em converges weakly} (or {\em converges narrowly}) to $P$. 
%A version of the pormanteau theorem (see \cite[Section 5.1.1]{Ambrosio}, in particular Formula (5.1.15) there) says that if $h:[0,K^2] \to (-\infty,\infty]$ is an extended function (i.e. it can take 

Let $P'_m,P'$ be respectively the restriction of the measures $P_m,P$ on $(0,K^2]$. Condition (iii) of Definition \ref{def.appro} means that $\lim_{m\to \infty} P_m(0)= P(0)$. From here and the weak convergence of $P_m$ to $P$, one can prove that the sequence $P'_m$ weakly converges to $P'$. By a version of the pormanteau theorem (see \cite[Section 5.1.1]{Ambrosio}, in particular Formula (5.1.15) there), one has  that if $h$ is a function on $(0,K^2]$ which is continuous and bounded from above, then
$$ \limsup_{m\to \infty} \int_{(0,K^2]} h d P'_m \le \int_{(0,K^2]} h dP'.$$
Applying to $h=\ln(x)$, we get 
$$ \limsup_{m\to \infty} (\ln\det G_m) \le \ln\det F,$$
which proves Theorem \ref{r.detbound1}.
\end{remark}

\end{document}